\documentclass[10pt,oneside]{amsart}

\usepackage[
paper=a4paper,
headsep=20pt,text={32pc,50pc},centering,includehead
]{geometry}

\usepackage[bookmarks]{hyperref}
\usepackage{amssymb,amsxtra,dsfont}
\usepackage[all]{xy}
\usepackage{pb-diagram,pb-xy}
\usepackage{graphicx,color}
\usepackage{epstopdf}
\usepackage{pinlabel}
\usepackage{float}
\usepackage{array,calc,booktabs}
\usepackage[
  textwidth=1.2in,
  backgroundcolor=yellow,
  bordercolor=orange,
  textsize=small
]{todonotes}

\iftrue
\makeatletter
\def\@maketitle{%
  \normalfont\normalsize
  \@adminfootnotes
  \@mkboth{\@nx\shortauthors}{\@nx\shorttitle}%
  \global\topskip42\p@\relax 
  \@settitle
  \ifx\@empty\authors \else \@setauthors \fi
  \ifx\@empty\@dedicatory
  \else
    \baselineskip22\p@
    \vtop{{\small\itshape\@dedicatory\@@par}%
      \global\dimen@i\prevdepth}\prevdepth\dimen@i
  \fi
  \@setabstract
  \normalsize
  \if@titlepage
    \newpage
  \else
    \dimen@34\p@ \advance\dimen@-\baselineskip
    \vskip\dimen@\relax
  \fi
} 
\def\@settitle{%
  \vspace*{-15pt}
  \begin{flushleft}%
    \LARGE\bfseries
    \strut\@title\strut
  \end{flushleft}%
}
\def\@setauthors{%
  \begingroup
  \def\thanks{\protect\thanks@warning}%
  \trivlist
  \raggedright
  \large \@topsep27\p@\relax
  \advance\@topsep by -\baselineskip
  \item\relax
  \author@andify\authors
  \def\\{\protect\linebreak}%
  \authors
  \ifx\@empty\contribs
  \else
    ,\penalty-3 \space \@setcontribs
    \@closetoccontribs
  \fi
  \normalfont
  \endtrivlist
  \endgroup
}
\def\@setaddresses{\par
  \nobreak \begingroup
  \small\raggedright
  \def\author##1{\nobreak\addvspace\smallskipamount}%
  \def\\{\unskip, \ignorespaces}%
  \interlinepenalty\@M
  \def\address##1##2{\begingroup
    \par\addvspace\bigskipamount\noindent
    \@ifnotempty{##1}{(\ignorespaces##1\unskip) }%
    {\ignorespaces##2}\par\endgroup}%
  \def\curraddr##1##2{\begingroup
    \@ifnotempty{##2}{\nobreak\noindent\curraddrname
      \@ifnotempty{##1}{, \ignorespaces##1\unskip}\/:\space
      ##2\par}\endgroup}%
  \def\email##1##2{\begingroup
    \@ifnotempty{##2}{\nobreak\noindent E-mail address%
      \@ifnotempty{##1}{, \ignorespaces##1\unskip}\/:\space
      \ttfamily##2\par}\endgroup}%
  \def\urladdr##1##2{\begingroup
    \def~{\char`\~}%
    \@ifnotempty{##2}{\nobreak\noindent\urladdrname
      \@ifnotempty{##1}{, \ignorespaces##1\unskip}\/:\space
      \ttfamily##2\par}\endgroup}%
  \addresses
  \endgroup
  \global\let\addresses=\@empty
}
\def\@setabstracta{%
    \ifvoid\abstractbox
  \else
    \skip@17pt \advance\skip@-\lastskip
    \advance\skip@-\baselineskip \vskip\skip@
    \box\abstractbox
    \prevdepth\z@ 
    \vskip-15pt
  \fi
}
\renewenvironment{abstract}{%
  \ifx\maketitle\relax
    \ClassWarning{\@classname}{Abstract should precede
      \protect\maketitle\space in AMS document classes; reported}%
  \fi
  \global\setbox\abstractbox=\vtop \bgroup
    \normalfont\small
    \list{}{\labelwidth\z@
      \leftmargin0pc \rightmargin\leftmargin
      \listparindent\normalparindent \itemindent\z@
      \parsep\z@ \@plus\p@
      
    }%
    \item[\hskip\labelsep\bfseries\abstractname.]%
}{%
  \endlist\egroup
  \ifx\@setabstract\relax \@setabstracta \fi
}

\def\ps@headings{\ps@empty
  \def\@evenhead{%
    \setTrue{runhead}%
    \normalfont\scriptsize
    \rlap{\thepage}\hfill
    \def\thanks{\protect\thanks@warning}%
    \leftmark{}{}}%
  \def\@oddhead{%
    \setTrue{runhead}%
    \normalfont\scriptsize
    \def\thanks{\protect\thanks@warning}%
    \rightmark{}{}\hfill \llap{\thepage}}%
  \let\@mkboth\markboth
}\ps@headings

\def\section{\@startsection{section}{1}%
  \z@{-1.4\linespacing\@plus-.5\linespacing}{.8\linespacing}%
  {\normalfont\bfseries\Large}}
\def\subsection{\@startsection{subsection}{2}%
  \z@{-.8\linespacing\@plus-.3\linespacing}{.5\linespacing\@plus.2\linespacing}%
  {\normalfont\bfseries\large}}
\def\subsubsection{\@startsection{subsubsection}{3}%
  \z@{.7\linespacing\@plus.2\linespacing}{-1.5ex}%
  {\normalfont\bfseries}}
\def\@secnumfont{\bfseries}

\renewcommand\contentsnamefont{\bfseries}
\def\@starttoc#1#2{\begingroup
  \setTrue{#1}%
  \par\removelastskip\vskip\z@skip
  \@startsection{}\@M\z@{\linespacing\@plus\linespacing}%
    {.5\linespacing}{
      \contentsnamefont}{#2}%
  \ifx\contentsname#2%
  \else \addcontentsline{toc}{section}{#2}\fi
  \makeatletter
  \@input{\jobname.#1}%
  \if@filesw
    \@xp\newwrite\csname tf@#1\endcsname
    \immediate\@xp\openout\csname tf@#1\endcsname \jobname.#1\relax
  \fi
  \global\@nobreakfalse \endgroup
  \addvspace{32\p@\@plus14\p@}%
  \let\tableofcontents\relax
}
\def\contentsname{Contents}
\def\l@section{\@tocline{2}{.5ex}{0mm}{5pc}{}}
\def\l@subsection{\@tocline{2}{0pt}{2em}{5pc}{}}
\makeatother
\fi 


\def\to{\mathchoice{\longrightarrow}{\rightarrow}{\rightarrow}{\rightarrow}}
\makeatletter
\newcommand{\shortxra}[2][]{\ext@arrow 0359\rightarrowfill@{#1}{#2}}
\def\longrightarrowfill@{\arrowfill@\relbar\relbar\longrightarrow}
\newcommand{\longxra}[2][]{\ext@arrow 0359\longrightarrowfill@{#1}{#2}}

\def\addtagsub#1{\let\oldtf=\tagform@\def\tagform@##1{\oldtf{##1}\hbox{$_{#1}$}}}

\makeatother


\makeatletter
\def\Nopagebreak{\@nobreaktrue\nopagebreak}

\newtheoremstyle{theorem-giventitle}
        {}{}              
        {\itshape}                      
        {}                              
        {\bfseries}                     
        {.}                             
        {\thm@headsep}                             
        {\thmnote{\bfseries#3}}
\newtheoremstyle{theorem-givenlabel}
        {}{}              
        {\itshape}                      
        {}                              
        {\bfseries}                     
        {.}                             
        {\thm@headsep}                             
        {\thmname{#1}~\thmnumber{#3}\setcurrentlabel{#3}}

\newtheoremstyle{definition-giventitle}
        {}{}              
        {}                      
        {}                              
        {\bfseries}                     
        {.}                             
        {\thm@headsep}                             
        {\thmnote{\bfseries#3}}
\def\setcurrentlabel#1{\gdef\@currentlabel{#1}}

\makeatother

\newtheorem{theorem}{Theorem}[section]
\newtheorem{theoremalpha}{Theorem}

\newtheorem{lemma}[theorem]{Lemma}

\theoremstyle{definition}
\newtheorem{definition}[theorem]{Definition}

\newtheorem{example}[theorem]{Example}
\newtheorem{remark}[theorem]{Remark}

\theoremstyle{theorem-giventitle}
\newtheorem{theorem-named}{}
\theoremstyle{theorem-givenlabel}
\newtheorem{theorem-labeled}{Theorem}

\theoremstyle{definition-giventitle}
\newtheorem{definition-named}{}
\newtheorem{step-named}{}

\numberwithin{equation}{section}

\def\Z{\mathbb{Z}}

\def\R{\mathbb{R}}

\def\id{\mathrm{id}}

\def\rhot{\rho^{(2)}}

\def\Mod{\operatorname{Mod}}

\def\setminus{\smallsetminus}

\def\csimp{c^\text{simp}}
\def\cHL{c^\text{HL}}
\def\csurg{c^\text{surg}}

\def\sHL{s_\text{HL}}
\def\ssurg{s_\text{surg}}

\begin{document}

\vspace*{0mm}

\title%
[Complexities of 3-manifolds]
{Complexities of 3-manifolds from triangulations, Heegaard splittings, and surgery presentations}

\author{Jae Choon Cha}
\address{
  Department of Mathematics\\
  POSTECH\\
  Pohang 37673\\
  Republic of Korea
  \quad -- and --\linebreak
  School of Mathematics\\
  Korea Institute for Advanced Study \\
  Seoul 02455\\
  Republic of Korea
}
\email{jccha@postech.ac.kr}

\def\subjclassname{\textup{2010} Mathematics Subject Classification}
\expandafter\let\csname subjclassname@1991\endcsname=\subjclassname
\expandafter\let\csname subjclassname@2000\endcsname=\subjclassname
\subjclass{%
}


\begin{abstract}
  We study complexities of 3-manifolds defined from triangulations,
  Heegaard splittings, and surgery presentations.  We show that these
  complexities are related by linear inequalities, by presenting
  explicit geometric constructions.  We also show that our linear
  inequalities are asymptotically optimal.  Our results are used
  in~\cite{Cha:2014-1} to estimate Cheeger-Gromov $L^2$
  $\rho$-invariants in terms of geometric group theoretic and knot
  theoretic data.
\end{abstract}

\maketitle

\section{Introduction and main results}

In this paper we study the relationship between various notions of
complexities of 3-manifolds.  In what follows, we always assume that
3-manifolds are compact.

\subsubsection*{Simplicial complexity}

The first notion of complexity we consider is defined from
triangulations.  In this paper a triangulation designates a simplicial
complex structure.

\begin{definition}
  For a 3-manifold $M$, the \emph{simplicial complexity} $\csimp(M)$
  is defined to be the minimal number of 3-simplices in a
  triangulation of~$M$.
\end{definition}

A similar notion of complexity defined from more flexible
triangulations is often considered in the literature (e.g., see
\cite{Matveev-Petronio-Vesnin:2009-1, Jaco-Rubinstein-Tillman:2009-1,
  Jaco-Rubinstein-Tillman:2011-1, Jaco-Rubinstein-Tillman:2013-1}): a
\emph{pseudo-simplicial triangulation} of a $3$-manifold $M$ is
defined to be a collection of $3$-simplices together with affine
identifications of faces from which $M$ is obtained as the quotient
space.  The \emph{pseudo-simplicial complexity}, or the
\emph{complexity} $c(M)$ of $M$ is defined to be the minimal number of
$3$-simplices in a pseudo-simplicial triangulation.  For closed
irreducible 3-manifolds, $c(M)$ agrees with Matveev's
complexity~\cite{Matveev:1990-1} defined in terms of spines, unless
$M=S^3$, $\R P^3$, or~$L(3,1)$.  Since the second barycentric
subdivision of a pseudo-simplicial triangulation is a triangulation
and a 3-simplex is decomposed to $(4!)^2=576$ 3-simplices in the
second barycentric subdivision, we have
\[
\frac{1}{576}\cdot \csimp(M) \le c(M) \le \csimp(M).
\]

\subsubsection*{Heegaard-Lickorish complexity}

Recall that a Heegaard splitting of a closed 3-manifold is represented
by a mapping class in the mapping class group $\Mod(\Sigma_g)$ of a
surface $\Sigma_g$ of genus~$g$.  (Our precise convention is described
in the beginning of
Section~\ref{section:triangulation-from-heegaard-splitting}.  The
identity mapping class gives the standard Heegaard splitting of $S^3$
shown in Figure~\ref{figure:lickorish-generators}.)  It is well known
that $\Mod(\Sigma_g)$ is finitely generated; Lickorish showed that
$\Mod(\Sigma_g)$ is generated by the $\pm1$ Dehn twists about the
$3g-1$ curves $\alpha_i$, $\beta_i$, and $\gamma_i$ shown in
Figure~\ref{figure:lickorish-generators}~\cite{Lickorish:1962-1,Lickorish:1964-1}.

\begin{figure}[t]
  \labellist
  \small\hair 0mm
  \pinlabel {$\alpha_1$} at 46 70
  \pinlabel {$\beta_1$} at 34 14
  \pinlabel {$\gamma_1$} at 93 65
  \pinlabel {$\alpha_2$} at 138 70
  \pinlabel {$\beta_2$} at 124 14
  \pinlabel {$\gamma_2$} at 180 65
  \pinlabel {$\gamma_{g-1}$} at 228 65
  \pinlabel {$\alpha_g$} at 264 70
  \pinlabel {$\beta_g$} at 250 14
  \endlabellist
  \[
  \Sigma_g =\; 
  \vcenter{\hbox{\includegraphics[scale=.8]{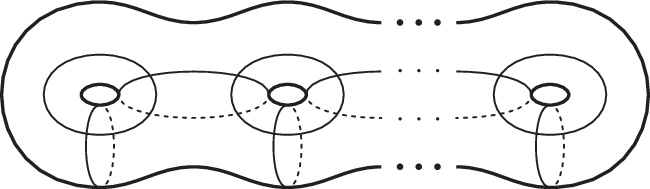}}}
  \]
  \caption{Standard Dehn twist curves of Lickorish.}
  \label{figure:lickorish-generators}
\end{figure}

From this, a geometric group theoretic notion of complexity is defined
for 3-manifolds as follows.

\begin{definition}
  \label{definition:heegaard-lickorish-complexity}
  The \emph{Heegaard-Lickorish complexity} $\cHL(M)$ of a closed
  $3$-manifold $M$ is defined to be the minimal word length, with
  respect to the Lickorish generators, of a mapping class
  $h\in \Mod(\Sigma_g)$ on a surface $\Sigma_g$ of arbitrary genus
  which gives a Heegaard splitting of~$M$.
\end{definition}

Note that both the genus $g$ of a Heegard surface $\Sigma_g$ and the
mapping class $h$ vary in taking the minimum in
Definition~\ref{definition:heegaard-lickorish-complexity}.  By
definition, $\cHL(S^3)=0$.

We remark that the Heegaard-Lickorish complexity tells us more
delicate information than the Heegaard genus.  It turns out that the
difference of the Heegaard-Lickorish complexities of two $3$-manifolds
with the same Heegaard genus can be arbitrarily large, whereas the
Heegaard genus of a 3-manifold is bounded by twice its
Heegaard-Lickorish complexity.  See
Lemma~\ref{lemma:heegaard-genus-bound-from-Heegaard-Lickorish-complexity}
and related discussions in
Section~\ref{section:triangulation-from-heegaard-splitting}.

Our first result is the following relationship between the two
complexities defined above.

\begin{theoremalpha}
  \label{theorem:simplicial-and-HL-complexity}
  For any closed 3-manifold $M\ne S^3$, $\csimp(M) \le 552\cdot
  \cHL(M)$.
\end{theoremalpha}

We remark that upper bounds to (pseudo-)simplicial complexity in terms
of a Heegaard splitting were studied earlier in the literature, for
instance see \cite[Proposition~3]{Matveev:1990-1} and
\cite[Proposition~2.1.8]{Matveev:2007-1}.  In many cases
Theorem~\ref{theorem:simplicial-and-HL-complexity} provides a sharper
upper bound.  For more about this, see
Remark~\ref{remark:other-bounds-from-heegaard} as well as
Theorems~\ref{theorem:asymptotic-growth}
and~\ref{theorem:linear-bounds-are-optimal} below which concern the
optimality of our bound.  The optimality is essential in an
application of~\cite{Cha:2014-1} (see the last part of the
introduction).

\subsubsection*{Surgery complexity}

To define another notion of complexity of 3-manifolds from knot
theoretic information, we consider Dehn surgery with integral
coefficients.  For a framed link $L$ in $S^3$, let
$f(L)=\sum_i |f_i(L)|$ where $f_i(L)\in\Z$ is the framing on the $i$th
component of~$L$.  If a component $K$ of $L$ is contained in an
embedded 3-ball in $S^3$ which is disjoint from other components, then
we call $K$ a \emph{split component}.  Let $n(L)$ be the number of
split unknotted zero framed components of~$L$.  An example with
$f(L)=2$, $n(L)=1$ is illustrated in
Figure~\ref{figure:framed-link-example}.  We denote by $c(L)$ the
\emph{crossing number} of a link $L$ in $S^3$, that is, $c(L)$ is the
minimal number of crossings in a planar diagram of~$L$.  As a
convention, if $L$ is empty, then $c(L)=f(L)=n(L)=0$.

\begin{figure}[H]
  \centering
  \begin{tikzpicture}[
    x=1.1mm,y=1.1mm, line width=1.2pt,
    over/.style={draw=white,double=black,double distance=1.2pt,line width=2pt},
    thinover/.style={draw=white,double=black,double distance=.4pt,line width=1.1pt},
    hidden/.style={text=white,opacity=0}
    ]
    \small
    \def\c[#1]#2{coordinate(#2) node[hidden,#1]{\footnotesize$#2$}}
    \draw[over] (-13,10) arc(180:0:4 and 2) \c[]{G};
    \draw[over] (5,5) arc(180:0:9 and 2) \c[]{H};
    \draw[over] (5,0) \c[]{C} -- (0,0) arc(270:90:9 and 10) \c[]{A} 
    ++(8,-5) \c[]{F} ..controls+(270:6)and+(90:10).. ++(-8,-14.5) \c[]{B};
    \draw[over] (B) arc(180:360:5) ..controls+(90:10)and+(270:6).. ++(-8,14.5) \c[]{D}
    ..controls+(90:3)and+(-5,0).. ++(8,5) \c[]{E};
    \draw[over] (E) arc(90:-90:9 and 10) -- (C);
    \draw[over] (A) node[anchor=south]{$-3$} ..controls+(5,0)and+(90:3).. (F);
    \draw[over] (G) arc(0:-180:4 and 2) node[anchor=east]{$1$};
    \draw[over] (H) node[anchor=west]{$0$} arc(0:-180:9 and 2);
    \draw[over] (27,15) circle(4) ++(4,0) node[anchor=west]{$0$};
  \end{tikzpicture}
  \caption{A framed link $L$ with $f(L)=2$, $n(L)=1$.}
  \label{figure:framed-link-example}
\end{figure}

\begin{definition}
  The \emph{surgery complexity} of a closed 3-manifold $M$ is defined by
  \[
  \csurg(M)= \min_L \{2c(L)+f(L)+n(L)\}
  \]
  where $L$ varies over framed links in $S^3$ from which $M$ is
  obtained by surgery.
\end{definition}

We remark that we bring in $n(L)$ to detect $S^1\times S^2$ summands,
which can be added to any 3-manifold by connected sum without altering
$c(L)$ and $f(L)$ of a framed link $L$ giving the 3-manifold.  Note
that $n(L)=0$ for any $L$ that gives $M$ if $M$ has no $S^1\times S^2$
summand.  In particular it is the case if $M$ is irreducible.  Note
that $\csurg(S^3)=0$ by our convention.

Our second result is the following relationship between the simplicial
complexity and the surgery complexity.

\begin{theoremalpha}
  \label{theorem:simplicial-and-surgery-complexity}
  For any closed 3-manifold $M\ne S^3$, $\csimp(M) \le 72\cdot
  \csurg(M)$.
\end{theoremalpha}

We remark that Matveev gave a similar inequality which relates the
complexity $c(M)$ to a surgery presentation
\cite[Proposition~2.1.13]{Matveev:2007-1}

The proofs of Theorems~\ref{theorem:simplicial-and-HL-complexity}
and~\ref{theorem:simplicial-and-surgery-complexity} consist of
geometric arguments which explicitly construct triangulations from
Heegaard splittings and from surgery presentations.  Details are given
in Sections~\ref{section:triangulation-from-surgery-presentation}
and~\ref{section:triangulation-from-heegaard-splitting}.

\subsubsection*{Optimality of Theorems~\ref{theorem:simplicial-and-HL-complexity}
and~\ref{theorem:simplicial-and-surgery-complexity}}

It is natural to ask how sharp the inequalities in
Theorems~\ref{theorem:simplicial-and-HL-complexity}
and~\ref{theorem:simplicial-and-surgery-complexity} are.  This seems
to be a nontrivial problem, since it appears to be hard to determine
the complexities we consider, or even to find an efficient lower bound
for them.  We remark that the determination and lower bound problems
for the pseudo-simplicial complexity $c(M)$ have been studied
extensively in the literature and regarded as difficult
problems~\cite{Matveev:2003-1,Jaco-Rubinstein-Tillman:2013-1}.

We show that the \emph{linear} inequalities in
Theorems~\ref{theorem:simplicial-and-HL-complexity}
and~\ref{theorem:simplicial-and-surgery-complexity} are
\emph{asymptotically} optimal.  This can be described in terms of
standard notations for asymptotic growth, as follows.  Recall that we
write $f(n)\in O(g(n))$ if $f$ is \emph{bounded above} by $g$
asymptotically, that is, $\limsup_{n\to\infty} |f(n)/g(n)|$ is finite.
Also, $f(n)\in o(g(n))$ if $f(n)$ is \emph{dominated by} $g(n)$
asymptotically, that is, $\limsup_{n\to\infty} |f(n)/g(n)|=0$.  We
write $f(n)\in \Omega(g(n))$ if $f(n)$ is not dominated by~$g(n)$.

Define two functions $\sHL(\ell)$ and $\ssurg(k)$ by
\begin{align*}
  \sHL(\ell) &= \sup\{\csimp(M) \mid \cHL(M)\le \ell\},
  \\
  \ssurg(k) &= \sup\{\csimp(M) \mid \csurg(M)\le k\},
\end{align*}
where the supremums exist by
Theorems~\ref{theorem:simplicial-and-HL-complexity}
and~\ref{theorem:simplicial-and-surgery-complexity}\@.  In other
words, $\sHL(\ell)$ is the ``largest possible value'' of the
simplicial complexity for 3-manifolds with Heegaard-Lickorish
complexity $\ell$ or less.  We can interpret $\ssurg(k)$ similarly.

\begin{theoremalpha}
  \label{theorem:asymptotic-growth}
  $\sHL(\ell) \in O(\ell)\cap \Omega(\ell)$ and $\ssurg(k) \in O(k)
  \cap \Omega(k)$.
\end{theoremalpha}

As explicit examples, the
lens spaces $L(n,1)$ satisfy the following:

\begin{theoremalpha}
  \label{theorem:linear-bounds-are-optimal}
  For any $n>3$,
  \begin{gather*}
    \frac{1}{4357080} \cdot \cHL(L(n,1)) \le \csimp(L(n,1)),
    \\
    \frac{1}{4357080} \cdot \csurg(L(n,1)) \le \csimp(L(n,1)).
  \end{gather*}
\end{theoremalpha}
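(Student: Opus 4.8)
The plan is to measure all three complexities of $L(n,1)$ against the single parameter $n$: to produce $O(n)$ upper bounds for $\cHL$ and $\csurg$ from explicit presentations, and a matching \emph{linear lower bound} $\csimp(L(n,1))\ge n/4357080$. Both displayed inequalities then follow at once, since
\[
\frac{1}{4357080}\,\cHL(L(n,1))\le\frac{1}{4357080}\,n\le\csimp(L(n,1)),
\]
and the identical estimate with $\csurg$ in place of $\cHL$. This also explains why the two lines carry the same constant: it is entirely the constant of the one nontrivial step, the lower bound on $\csimp$, while both upper bounds equal~$n$.

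First I would record the two upper bounds. For the surgery complexity, $L(n,1)$ is obtained by $\pm n$-surgery on the unknot $U$; taking $L=U$ with framing $\pm n$ gives $c(L)=0$, $f(L)=n$, and $n(L)=0$, since the single component is unknotted but carries nonzero framing. Hence $\csurg(L(n,1))\le 2c(L)+f(L)+n(L)=n$. For the Heegaard--Lickorish complexity, $L(n,1)$ admits a genus-one Heegaard splitting whose gluing mapping class is $T^{n}$, the $n$-th power of a single Dehn twist about one of the two Lickorish curves on $\Sigma_1$ (with the stated convention that the identity class gives $S^3$, and $\Mod(\Sigma_1)\cong SL(2,\Z)$). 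As a word in the Lickorish generators this has length $\le n$, so $\cHL(L(n,1))\le n$.

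The heart of the matter is the lower bound $\csimp(L(n,1))\ge n/4357080$. Here I would first pass to the pseudo-simplicial complexity via the inequality $c(M)\le\csimp(M)$ recorded in the introduction, thereby reducing to a linear lower bound for $c(L(n,1))$, and then invoke the known minimal-triangulation results showing that the complexity of the lens spaces $L(n,1)$ grows linearly in $n$ \cite{Jaco-Rubinstein-Tillman:2009-1,Matveev:2003-1}. Propagating the constants through this reduction --- the barycentric factor separating $\csimp$ from $c$, together with the constants hidden in the complexity estimate --- produces the stated bound, valid for all $n>3$ (the small cases $n\le 3$ being precisely the exceptional lens spaces for which $c$ and Matveev complexity disagree).

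I expect this last step to be the only real obstacle. The upper bounds are immediate, but there is no elementary linear lower bound for $\csimp$: crude invariants give far too little. For example, $|H_1(L(n,1))|=n$ yields only a \emph{logarithmic} bound, because the torsion of the simplicial chain complex of a $t$-tetrahedron triangulation is the determinant of an $O(t)\times O(t)$ integer matrix with bounded entries and so can be as large as $C^{t}$, forcing merely $t\ge\log_C n$; and counting face-pairings of $t$ tetrahedra similarly gives only $t=\Omega(\log n/\log\log n)$. Genuine linear growth is a subtle geometric fact about minimal triangulations of lens spaces, and it is the sole non-formal input to the theorem; correspondingly, the large value $4357080$ is an artifact of bookkeeping across the several conversion factors rather than of any single sharp estimate. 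Should a self-contained route be preferred, the same linear growth can instead be harvested from a homeomorphism invariant that is linear in $n$ on $L(n,1)$ and is itself bounded by a constant multiple of the number of tetrahedra (a Dedekind-sum/Casson--Walker type quantity being the natural candidate), but I would organize the main proof around the complexity lower bound above.
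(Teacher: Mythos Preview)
Your overall strategy---bound $\cHL(L(n,1))$ and $\csurg(L(n,1))$ above by $n$ via explicit presentations, bound $\csimp(L(n,1))$ below linearly in $n$, and combine---is exactly the paper's, and your upper bounds are the paper's upper bounds. The divergence is in how the lower bound is obtained.

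The paper does \emph{not} use the Jaco--Rubinstein--Tillmann complexity results for the main argument. It uses Cheeger--Gromov $\rho$-invariants: from~\cite{Cha:2014-1} one has $|\rhot(M,\phi)|\le 363090\cdot\csimp(M)$ for every $\phi$, and the Atiyah--Patodi--Singer computation gives $\rhot(L(n,1),\id)=\tfrac{n}{3}+\tfrac{2}{3n}-1\ge\tfrac{n-3}{3}$, whence $\csimp(L(n,1))\ge(n-3)/1089270$. The constant $4357080=4\cdot1089270$ then appears because $(n-3)/1089270\ge n/4357080$ precisely for $n\ge4$. So the ``Dedekind-sum/Casson--Walker type'' invariant you mention as an alternative is in fact the paper's primary route, and the specific constant $4357080$ is entirely an artifact of that route; no barycentric factor enters.

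Your preferred route via pseudo-simplicial complexity has a gap as written. The paper itself records that \cite{Jaco-Rubinstein-Tillman:2009-1} yields $c(L(2k,1))=2k-3$ only for \emph{even} order $n=2k$, and invokes it only to sharpen the constant in that special case (obtaining the coefficient $1-3/2k$ in place of $1/4357080$). For odd $n>3$ the references you cite do not furnish the linear lower bound you need, so the argument does not cover the full range of the theorem. Moreover, even where JRT applies, one gets $\csimp\ge c=n-3$ directly---no barycentric factor is needed in the direction $c(M)\le\csimp(M)$---which would produce a constant of $4$, not $4357080$; so your account of where the large constant comes from is inconsistent with your own argument.
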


We also prove a similar inequality for a larger class of 3-manifolds.
See Theorem~\ref{theorem:surgery-manifold-inequality} and related
discussions in Section~\ref{section:linear-bounds-are-optimal}.

The proofs of Theorems~\ref{theorem:asymptotic-growth}
and~\ref{theorem:linear-bounds-are-optimal} are given in
Section~\ref{section:linear-bounds-are-optimal}.

\subsubsection*{Applications to universal bounds for Cheeger-Gromov
  invariants}

Results in this paper are closely related to the recent development of
a topological approach to the universal bounds of Cheeger-Gromov $L^2$
$\rho$-invariants in~\cite{Cha:2014-1}.  In fact,
Theorems~\ref{theorem:simplicial-and-HL-complexity}
and~\ref{theorem:simplicial-and-surgery-complexity} of this paper are
used as essential ingredients in~\cite{Cha:2014-1} to give explicit
linear estimates of Cheeger-Gromov $\rho$-invariants of 3-manifolds in
terms of geometric group theoretical and knot theoretical data.  See
Theorems~1.8 and~1.9 of~\cite{Cha:2014-1}.  This application is a
major motivation of the present paper.  Our inequalities in
Theorems~\ref{theorem:simplicial-and-HL-complexity}
and~\ref{theorem:simplicial-and-surgery-complexity} are sharp enough,
compared with earlier similar work, to give results that the linear
estimates in~\cite{Cha:2014-1} are \emph{asymptotically optimal}.  See
Theorem~7.8 of~\cite{Cha:2014-1}.

On the other hand, the lower bounds in
Theorem~\ref{theorem:linear-bounds-are-optimal} are proven by
employing results of~\cite{Cha:2014-1} which relate triangulations and
the Cheeger-Gromov $\rho$-invariants.  See
Section~\ref{section:linear-bounds-are-optimal} for more details.

\subsection*{Acknowledgements}

The author thanks an anonymous referee for comments which were very
helpful in improving results and in fixing a mistake of an earlier
version of this paper.  This work was partially supported by NRF
grants 2013067043 and 2013053914.

\section{Linear complexity triangulations from surgery presentations}
\label{section:triangulation-from-surgery-presentation}

In this section we present a construction of a triangulation from a
surgery presentation.


\def\kinkp{\lower.8pt\hbox{\tikz[x=.5ex,y=.5ex,line width=.8pt]{
    \draw (0,0) ..controls+(2,0) and +(1.8,0).. (1.5,3);
    \draw[draw=white,double=black,double distance=.8pt,line width=.8pt]
    (1.5,3) ..controls+(-1.8,0) and +(-2,0).. (3,0);
  }}}
\def\kinkn{\lower.8pt\hbox{\tikz[x=.5ex,y=.5ex,xscale=-1,line width=.8pt]{
    \draw (0,0) ..controls+(2,0) and +(1.8,0).. (1.5,3);
    \draw[draw=white,double=black,double distance=.8pt,line width=.8pt]
    (1.5,3) ..controls+(-1.8,0) and +(-2,0).. (3,0);
  }}}
  
\begin{lemma}
  \label{lemma:triangulation-for-blackboard-framing-surgery}
  Suppose $L$ is a framed link in~$S^3$.  Suppose there is a planar
  diagram $D$ with $c$ or fewer crossings for $L$, in which there is
  no local kink \textup{(}\kinkp, \kinkn\textup{)} and each zero
  framed component of $L$ is involved in a crossing.  Let $w_i\in \Z$
  be the writhe of the $i$th component in the diagram~$D$.  Then the
  3-manifold $M$ obtained by surgery on $L$ has simplicial complexity
  at most~$96c+48\sum|f_i(L)-w_i|$.
\end{lemma}



\begin{example}
  \label{example:universal-bound-for-6_1}
  Consider the stevedore knot, which is $6_1$ in the table in
  Rolfsen~\cite{Rolfsen:1976-1}, or
  KnotInfo~\cite{cha-livingston:knotinfo}.  It has a planar diagram
  with 6 crossings, where 2 of them have the same sign but the other 4
  have the opposite sign.  It follows
  that the zero surgery manifold $M$ of $6_1$ satisfies $\csimp(M) \le
  96\cdot 6 + 48\cdot 2 = 672$.
\end{example}

Before we prove
Lemma~\ref{lemma:triangulation-for-blackboard-framing-surgery}, we
prove Theorem~\ref{theorem:simplicial-and-surgery-complexity} using
Lemma~\ref{lemma:triangulation-for-blackboard-framing-surgery}.

\begin{proof}
  [Proof of Theorem~\ref{theorem:simplicial-and-surgery-complexity}]
  Recall that Theorem~\ref{theorem:simplicial-and-surgery-complexity} says
  \[
  \csimp(M) \le 72\cdot \csurg(M)
  \]
  for $M\ne S^3$.

  We need the following two observations: firstly, we have
  \begin{equation}
    \label{equation:subadditivity-complexity}
    \csimp(M_1\# M_2) \le \csimp(M_1)+\csimp(M_2)-2,
  \end{equation}
  since the connected sum of two triangulated 3-manifolds can be
  performed by deleting a 3-simplex from each and then glueing faces.
  Second, we have
  \begin{equation}
    \label{equation:S^1-times-S^2-estimate}
    \csimp(S^1\times S^2)\le 72.
  \end{equation}
  For instance, by taking the product of a triangle triangulation of
  $S^1$ and its suspension which is a triangulation of $S^2$, and then
  by applying the standard prism decomposition to each product
  $\Delta^1\times \Delta^2$ (see
  Figure~\ref{figure:prism-decomposition}), we obtain a triangulation
  of $S^1\times S^2$ with $3\cdot 6\cdot 3 = 54$ tetrahedra.

  \begin{figure}[H]
    \includegraphics[scale=.8]{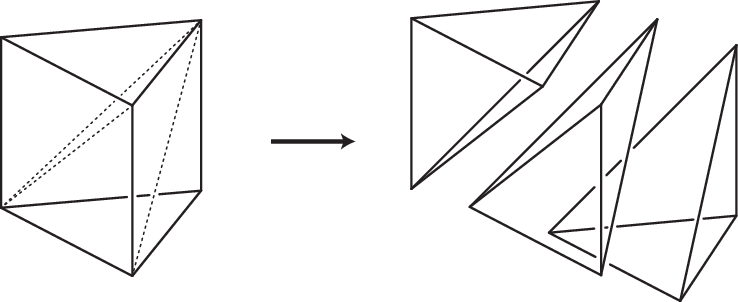}
    \caption{Prism decomposition of $\Delta^1\times\Delta^2$.}
    \label{figure:prism-decomposition}
  \end{figure}

  Choose a framed link $L$ such that $M$ is obtained by surgery on~$L$
  and $2c(L)+f(L)+n(L)=\csurg(M)$.  Choose a planar diagram $D$ for
  $L$ with minimal number of crossings, and let $D_0$ be the
  subdiagram of $D$ consisting of zero framed split components of~$L$.
  From the minimality, it follows that $D$ has no local kink and $D_0$
  consists of zero framed circles with no crossing.  Also every zero
  framed component of $L$ in $D-D_0$ is involved in a crossing.  Let
  $M'$ be the 3-manifold obtained by surgery along the given framing
  of~$D-D_0$.  Since a component of $D_0$ contributes an
  $S^1\times S^2$ summand,
  $M=M'\mathbin{\#} (n(L)\cdot(S^1\times S^2))$.

  If $D=D_0$, then $c(L)=f(L)=0$ and $M'=S^3$; also, $n(L)\ge 1$
  since $M\ne S^3$.  It follows that
  \[
  \csimp(M)\le n(L) \cdot \csimp(S^1\times S^2) \le 72\cdot n(L)
  \]
  by using \eqref{equation:subadditivity-complexity} and
  \eqref{equation:S^1-times-S^2-estimate}.  This is the desired
  conclusion for this case.

  If $D\ne D_0$, we have
  \begin{equation}
    \label{equation:removing-exceptional-components}
    \csimp(M) \le \csimp(M') + n(L)\cdot \csimp(S^1\times S^2) \le
    \csimp(M')+72\cdot n(L)
  \end{equation}
  by using \eqref{equation:subadditivity-complexity} and
  \eqref{equation:S^1-times-S^2-estimate}.  The number of crossings of
  $D-D_0$ is equal to that of $D$, which is equal to $c(L)$ by our
  choice of $D$.  Let $w_i$ be the writhe of the $i$th component in
  $D-D_0$, and $f_i$ be its given framing.  Since a crossing in the
  diagram contributes $1$, $0$, or $-1$ to $w_i$ for some $i$, it
  follows that $\sum |w_i| \le c(L)$.  Therefore we have
  \begin{equation}
    \label{equation:complexity-nonexceptional-part}
    \begin{aligned}
      \csimp(M') &\le 96\cdot c(L) + 48\cdot \sum |f_i-w_i| \\
      &\le  96\cdot c(L) + 48\cdot(f(L)+c(L)) \le 72\cdot (2c(L)+f(L))     
    \end{aligned}
  \end{equation}
  by Lemma~\ref{lemma:triangulation-for-blackboard-framing-surgery}.
  From \eqref{equation:removing-exceptional-components} and
  \eqref{equation:complexity-nonexceptional-part}, the desired
  conclusion follows.
\end{proof}

\begin{proof}[Proof of
  Lemma~\ref{lemma:triangulation-for-blackboard-framing-surgery}]

  We will construct a triangulation of the exterior of $L$ which is
  motivated from J. Weeks' SnapPea (see~\cite{Weeks:2005-1}), and then
  will triangulate the Dehn filling tori in a compatible way.

  In what follows we view $D$ as a planar diagram lying on~$S^2$.  By
  the subadditivity~\eqref{equation:subadditivity-complexity}, we may
  assume that the diagram $D$ is nonsplit, that is, any simple closed
  curve in $S^2$ disjoint from $D$ bounds a disk disjoint from~$D$.
  This is equivalent to that every region of $D$ is a disk.


  Either $D$ has at least one crossing, or $D$ is a circle with no
  crossings.  First, suppose that it is the former case.

  Consider the dual graph $G_0$ of $D$, whose regions are quadrangles
  corresponding to crossings.  (Since $D$ has no local kinks, the four
  vertices of each quadrangles are mutually distinct.)  For each
  component of the link $L$, choose an edge of $G_0$ which is dual to
  the component (that is, the edge intersects a strand of $D$ that
  belongs to the component), and add $2|f_i-w_i|$ parallels of the
  edge, where $f_i=f_i(L)$ is the framing and $w_i\in \Z$ is the
  writhe of the component.  Denote the resulting graph by~$G$.  For an
  example, see the left of Figure~\ref{figure:exterior-decomposition},
  which illustrates the case of a $(+1)$-framed figure eight.

  View the link $L$ as a submanifold of $S^2\times[-1,1]$ which
  projects to $D$ under $S^2\times[-1,1] \to S^2$, and remove from
  $S^2\times[-1,1]$ an open tubular neighborhood $\nu(L)$ of $L$ which
  is tangential to $S^2\times\{-1,1\}$ at
  $(\text{each crossing})\times\{-1,1\}$; cutting along
  $G\times [-1,1]$, we obtain pieces of two types: (i) cubes with two
  tunnels, which correspond to the crossings of $D$, and (ii) those of
  the form $(\text{2-gon})\times[-1,1]$ with a tunnel removed, which
  correspond to the edges of~$G-G_0$.  See the middle of
  Figure~\ref{figure:exterior-decomposition}.

  \begin{figure}[ht]
    \labellist\small
    \pinlabel{$D$} at 85 235
    \pinlabel{$G$} at 20 136
    \pinlabel{type (i)} at 240 140
    \pinlabel{type (ii)} at 240 20
    \endlabellist
    \includegraphics{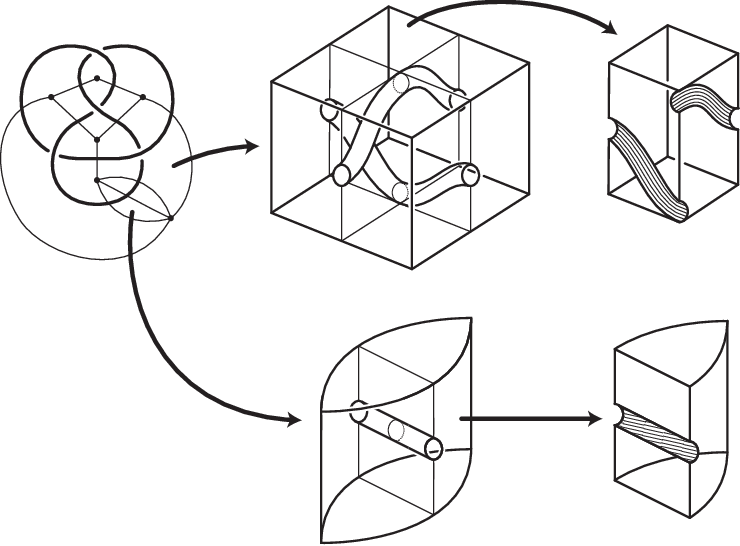}
    \caption{A decomposition of a link diagram.}
    \label{figure:exterior-decomposition}
  \end{figure}

  Cut each piece along $D\times[-1,1]$.  In case of type (i), we
  obtain 4 equivalent subpieces.  See the top right of
  Figure~\ref{figure:exterior-decomposition}.  The hatched quadrangles
  represent~$\partial \nu(L)$.  Each of the 4 subpieces can be viewed
  as a cube shown in the left of
  Figure~\ref{figure:subpiece-decomposition}.  Let $p$ be the vertex
  shown in Figure~\ref{figure:subpiece-decomposition}, and triangulate
  the three square faces not adjacent to~$p$ as in the left of
  Figure~\ref{figure:subpiece-decomposition}.  By taking a cone from
  $p$, we obtain a triangulation of the each type (i) subpiece.  Since
  the triangulation of the faces away from $p$ has 14 triangles, the
  cone triangulation of a type (i) subpiece has 14 tetrahedra.

  \begin{figure}[t]
    \labellist
    \footnotesize\pinlabel{$p$} at 49 55
    \footnotesize\pinlabel{$q$} at 237 68
    \endlabellist    
    \includegraphics[scale=.9]{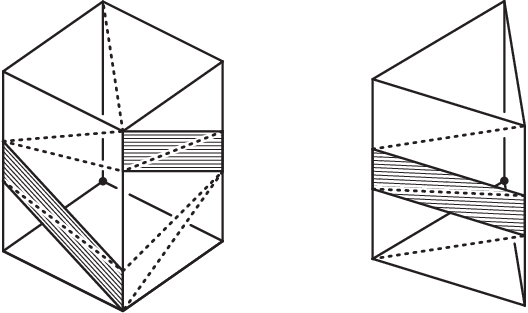}
    \caption{Decomposition of subpieces.}
    \label{figure:subpiece-decomposition}
  \end{figure}

  In case of type (ii), by cutting each piece along $D\times[-1,1]$,
  we obtain two subpieces each of which are as in the bottom right of
  Figure~\ref{figure:exterior-decomposition}.  For each type (ii)
  subpiece, triangulate the front face as shown in the right of
  Figure~\ref{figure:subpiece-decomposition}, and then triangulate the
  subpiece by taking the cone of the union of the front face and top
  triangle from the vertex $q$, similarly to the above type (i) case.
  This triangulation of a type (ii) subpiece has 7 tetrahedra.

  Suppose $D$ has $c$ crossings.  For brevity, denote
  $\delta := \sum |f_i-w_i|$.  There are $4c$ subpieces of type (i)
  and $4\delta$ subpieces of type~(ii).  By applying the above to each
  of them, we obtain a triangulation of
  $S^2\times[-1,1]\setminus \nu(L)$, which has
  $14\cdot 4c + 7\cdot 4\delta = 56c+28\delta$ tetrahedra.

  For $t=\pm1$, the triangulation restricts to a triangulation of
  $S^2\times \{t\}$ with $8c+4\delta$ triangles, since the top of type
  (i) and (ii) subpieces consist of two triangles and a single
  triangle respectively.  Attaching two $3$-balls triangulated as the
  cone of these triangulations, we obtain a triangulation of
  $S^3\setminus \nu(L)$ which has
  $(56c+28\delta)+ 2\cdot(8c+4\delta) = 72c+36\delta$ tetrahedra.

  In our triangulation, there are $8c+4\delta$ hatched quadrangular
  regions, and they are paired up to form $4c+2\delta$ annuli, and the
  $i$th boundary component of $\nu(L)$ is a union of $2k_i+2|f_i-w_i|$
  such annuli, where $k_i$ is the number of times the $i$th component
  of $L$ passes through a crossing.  We have $\sum k_i = 2c$.  (Since
  a component may pass through the same crossing twice, $k_i$ may not
  be equal to the number of crossings that the component passes
  through.)  See the left of
  Figure~\ref{figure:boundary-triangulation}; the hatched meridional
  annulus is one of these $2k_i+2|f_i-w_i|$ annuli.

  \begin{figure}[H]
    \labellist
    \small
    \pinlabel{$\alpha_i$} at 33 107
    \pinlabel{$\alpha'_i$} at 17 74
    \pinlabel{\footnotesize
      \begin{tabular}{c}from\\type (i)\\(untwisted)\end{tabular}} at 60 50
    \pinlabel{\footnotesize
      \begin{tabular}{c}from\\type (ii)\\(twisted)\end{tabular}} at 105 50
    \pinlabel{\footnotesize
      \begin{tabular}{c}from\\type (i)\\(untwisted)\end{tabular}} at 150 50
    \pinlabel{
      \begin{tabular}{c}
        the $i$th\\component\\of $\partial\nu(L)$
      \end{tabular}} at 193 68
    \pinlabel{$\alpha_i$} at 296 74
    \pinlabel{$\alpha'_i$} at 295 156
    \pinlabel{$D^2\times S^1$} at 255 66
    \pinlabel{$\alpha_i$} at 267 8
    \pinlabel{$\alpha'_i$} at 365 8
    \endlabellist
    \includegraphics{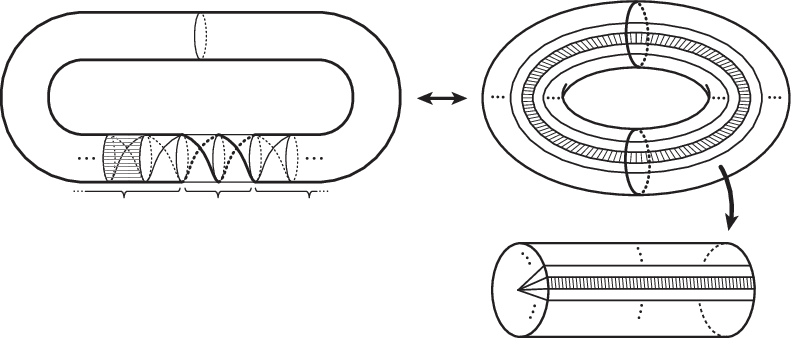}
    \caption{A boundary component and a Dehn filling torus.}
    \label{figure:boundary-triangulation}
  \end{figure}

  On the $i$th boundary component of~$\nu(L)$, take the top and bottom
  edges of the hatched quadrangles in type (i) subpieces, and the
  diagonal edges used to triangulate the hatched quadrangles in type
  (ii) subpieces.  We may assume that the union of these edges
  consists of two parallel circles, say $\alpha_i$ and $\alpha'_i$, by
  appropriately altering the choices of diagonals used above to
  triangulate the hatched quadrangles if necessary.  See the left of
  Figure~\ref{figure:boundary-triangulation} in which $\alpha_i$ and
  $\alpha'_i$ are shown as thick curves.  Moreover we may assume that
  the framing represented by $\alpha_i$ differs from the blackboard
  framing by $f_i-w_i$; that is, whenever $\alpha_i$ passes through a
  type (ii) piece, a half twist with the same sign as that of
  $f_i-w_i$ is introduced with respect to the blackboard framing,
  while $\alpha_i$ runs along the blackboard framing in type (i)
  pieces.  See the left of Figure~\ref{figure:boundary-triangulation},
  which illustrates the case of $f_i-w_i=1$.  Since the blackboard
  framing is equal to $w_i$, it follows that $\alpha_i$ represents the
  given framing~$f_i$.


  Take a solid torus $D^2\times S^1$ for each component of~$L$.
  Attach the solid tori to the exterior $S^3\setminus \nu(L)$ along
  orientation reversing homeomorphisms of boundary tori which takes
  the curves $\alpha_i$ and $\alpha'_i$ to meridians bounding disks
  and takes a hatched annulus to a longitudinal annulus, as shown in
  Figure~\ref{figure:boundary-triangulation}.  Pulling back the
  triangulation of $\partial(S^3\setminus\nu(L))$, we obtain a
  triangulation of $\partial(D^2\times S^1$).  It extends to a
  triangulation of $D^2\times S^1$ as follows.  By cutting the
  $D^2\times S^1$ along the meridional disks bounded by $\alpha_i$ and
  $\alpha'_i$, we obtain two solid cylinders $D^2\times [0,1]$.  Note
  that we already have $2k_i+2|f_i-w_i|$ vertices on
  $\partial D^2\times 0$.  We triangulate $D^2\times 0$ into
  $2k_i+2|f_i-w_i|$ triangles, by drawing edges joining the vertices
  to the center of $D^2\times 0$.  See the bottom of
  Figure~\ref{figure:boundary-triangulation}.  Taking the product with
  $[0,1]$, we decompose $D^2\times[0,1]$ into $2k_i+2|f_i-w_i|$
  triangular prisms.  Note that each prism corresponds to a hatched
  quadrangle.  Finally we apply the standard prism decomposition
  (Figure~\ref{figure:prism-decomposition}) to each prism.  Since each
  prism gives 3 tetrahedra and there are $8c+4\delta$ hatched
  quadrangles, the union of all the Dehn filling solid tori is
  decomposed into $3(8c+4\delta)=24c+12\delta$ tetrahedra.

  The triangulation of our surgery manifold $M$ is obtained by
  adjoining the Dehn filling tori triangulations to that of the
  exterior.  By the above tetrahedra counting, it follows that the
  number of tetrahedra in $M$ is at most
  $(72c+36\delta)+(24c+12\delta) = 96c+48\delta$.  This completes the
  proof when there is at least one crossing in~$D$.

  Now, suppose $D$ consists of a single circle without crossings.
  Note that the writhe is zero in this case.  Let $f_1\in \Z$ be the
  given framing.  By the hypothesis, $f_1\ne 0$.  We need to prove
  that $\csimp(M)\le 48|f_1|$.  If $f_1=\pm1$, then $M=L(f_1,1)=S^3$,
  and it is straightforward to verify that $\csimp(S^3)\le 48$.  (For
  instance, triangulate the equator $S^2\subset S^3$ into 4 triangles,
  by viewing it as the boundary of a 3-simplex, and triangulate the
  upper and lower hemispheres by taking a cone of the equator, to
  obtain a triangulation of $S^3$ with $8$ tetrahedra.)  Suppose
  $|f_1|\ge 2$.  Note that the dual graph $G_0$ of $D$ consists of two
  vertices and a single edge joining them.  Let $G$ be the graph
  obtained by adding $2|f_1|-1$ parallels of the edge, that is, $G$
  consists of $2|f_1|$ edges between the two vertices.  Apply the same
  construction as above, using this $G$, to triangulate~$M$.  In this
  case we have $2|f_1|$ type (ii) pieces and no type (i) pieces.
  Using $|f_1|\ge 2$, it is verified that our construction produces a
  simplicial complex structure.  (No two vertices of a tetrahedron are
  identified and each tetrahedron is uniquely determined by its
  vertices.)  By the above counting, the number of tetrahedra
  is~$48|f_1|$, as desired.
\end{proof}

\section{Linear complexity triangulations from Heegaard splittings}
\label{section:triangulation-from-heegaard-splitting}

In this section we present an explicit construction of a triangulation
from a Heegaard splitting given by a mapping class. Recall from
Definition~\ref{definition:heegaard-lickorish-complexity} that the
\emph{Heegaard-Lickorish complexity} of a closed $3$-manifold $M$ is
the minimal word length, in the Lickorish generators, of a mapping
class on an arbitrary surface which gives a Heegaard splitting of~$M$.
Here the Lickorish generators of the mapping class group
$\Mod(\Sigma_g)$ of an oriented surface $\Sigma_g$ of genus $g$ are
defined to be the $\pm1$ Dehn twists along the curves
$\alpha_1,\ldots,\alpha_g$, $\beta_1,\ldots,\beta_g$,
$\gamma_1,\ldots,\gamma_{g-1}$ shown in
Figure~\ref{figure:lickorish-generators}.

To make it precise, we use the following convention.  Fix a standard
embedding of a surface $\Sigma_g$ of genus $g$ in $S^3$ as in
Figure~\ref{figure:lickorish-generators}.  Then $\Sigma_g$ bounds the
inner handlebody $H_1$ and the outer handlebody $H_2$ in~$S^3$.  Let
$i_j\colon \Sigma_g\to H_j$ ($j=1,2$) be the inclusion.  The mapping
class $h\in\Mod(\Sigma_g)$ of a homeomorphism
$f\colon \Sigma_g\to \Sigma_g$ gives a Heegaard splitting
$(\Sigma_g, \{\beta_i\}, \{f(\alpha_i)\})$ of the $3$-manifold
\[
M = (H_1 \cup H_2)/i_1(f(x))\sim i_2(x),\ x\in \Sigma_g.
\]
In other words, $M$ is obtained by attaching $g$ $2$-handles to the
inner handlebody $H_1$ with boundary $\Sigma_g$ along the curves
$f(\alpha_i)$ and then attaching a $3$-handle.  Under our convention,
the identity mapping class gives us~$S^3$.

The Heegaard-Lickorish complexity can be compared with the Heegaard
genus by the following lemma.

\begin{lemma}
  \label{lemma:heegaard-genus-bound-from-Heegaard-Lickorish-complexity}
  Suppose $M$ is a closed $3$-manifold with a Heegaard splitting given
  by a mapping class $h\in \Mod(\Sigma_g)$ which is a product of
  $\ell$ Lickorish generators.  Then for some $g'\le 2\ell$, $M$
  admits a Heegaard splitting given by a mapping class
  $h' \in \Mod(\Sigma_{g'})$ which is a product of $\ell$ Lickorish
  generators.
\end{lemma}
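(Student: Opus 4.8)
The plan is to use the fact that a short product of Lickorish generators is supported on only a few handles, and then to destabilize the splitting across every handle that the word leaves untouched.

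First I would record which handles the word involves. Write $h = \tau_1 \cdots \tau_\ell$, where each $\tau_m$ is a $\pm 1$ Dehn twist about one of the Lickorish curves. The twist about $\alpha_i$ or $\beta_i$ is supported on the $i$th handle, while the twist about $\gamma_i$ is supported on the $i$th and $(i+1)$st handles; thus each $\tau_m$ involves at most two handles, and these are consecutive. Let $S \subseteq \{1,\dots,g\}$ be the set of all handle indices involved in some $\tau_m$, so $|S| \le 2\ell$. Taking $f$ to be the composition of the standard homeomorphisms supporting these twists, $f$ is supported on a regular neighborhood of the union of the used curves, which lies in the handles indexed by $S$. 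In particular $f$ is the identity away from these handles, so $f(\alpha_j) = \alpha_j$ for every $j \notin S$.

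Next I would destabilize across each untouched handle. The splitting is recorded by the Heegaard diagram $(\Sigma_g; \{\beta_i\}, \{f(\alpha_i)\})$. For $j \notin S$ we have $f(\alpha_j) = \alpha_j$, and the dual pair $\alpha_j, \beta_j$ meets transversely in a single point; moreover, since $f$ is supported on the handles in $S$ while $\alpha_i, \beta_i$ for $i \ne j$ avoid handle $j$, the pair $(\beta_j, f(\alpha_j))$ is disjoint from every other curve of both systems. Such a pair is a cancelling (stabilization) pair, so the splitting destabilizes along handle $j$, lowering the genus by one without affecting the remaining curves. Carrying this out for all $j \notin S$ produces a Heegaard splitting of the same manifold $M$ of genus $g' = |S| \le 2\ell$.

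Finally I would identify the new mapping class. Writing $S = \{i_1 < \cdots < i_{g'}\}$, the destabilizations are realized by an ambient isotopy of the standard picture that collapses the untouched handles and carries the standard embedding of the surviving genus-$g'$ subsurface onto the standard embedding of $\Sigma_{g'}$, sending the index $i_k$ to $k$. Under this identification a used twist about $\alpha_{i_k}$ or $\beta_{i_k}$ becomes a twist about $\alpha_k$ or $\beta_k$, and a used twist about $\gamma_i$ becomes a twist about $\gamma_k$: indeed, both $i$ and $i+1$ lie in $S$ and are consecutive integers, hence equal $i_k$ and $i_{k+1}$ for some $k$. Thus each of the original $\ell$ generators reappears as a genuine Lickorish generator of $\Mod(\Sigma_{g'})$, so the new mapping class $h'$ is again a product of $\ell$ Lickorish generators. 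I expect the step needing the most care to be this last bookkeeping, specifically the interaction of the destabilizations with the $\gamma$-twists: one must check that the two handles joined by a used $\gamma_i$ are never separated by a cancellation, which is precisely the observation that they are consecutive and both survive.
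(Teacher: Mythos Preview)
Your proposal is correct and follows essentially the same approach as the paper: both arguments rest on the observation that each Lickorish generator touches at most two handles, so at most $2\ell$ handles are used and the rest can be destabilized away. The paper phrases this as a one-handle-at-a-time induction and leaves the verification that the word survives as a product of $\ell$ Lickorish generators implicit, whereas you destabilize all untouched handles at once and spell out the relabeling (in particular the $\gamma$-bookkeeping) explicitly; your added care there is warranted but not a genuinely different route.
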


From
Lemma~\ref{lemma:heegaard-genus-bound-from-Heegaard-Lickorish-complexity},
it follows immediately that the Heegaard genus is not greater than
twice the Heegaard-Lickorish complexity.  On the other hand, it is
easily seen that a 3-manifold may be drastically more complicated than
another with the same Heegaard genus.  For example, all the lens
spaces $L(n,1)$ have Heegaard genus one, but $L(n,1)$ is represented
by a genus one mapping class of Heegaard-Lickorish word length~$n$.
In fact, by results of \cite{Cha:2014-1} (see also
Lemma~\ref{lemma:lens-space-complexity} and related discussions in the
present paper), $\csimp(L(n,1)) \to \infty$ as $n\to\infty$, and
consequently $\cHL(L(n,1)) \to \infty$ and $\csurg(L(n,1)) \to \infty$
by Theorems~\ref{theorem:simplicial-and-HL-complexity}
and~\ref{theorem:simplicial-and-surgery-complexity}\@.

\begin{proof}[Proof of
  Lemma~\ref{lemma:heegaard-genus-bound-from-Heegaard-Lickorish-complexity}]

  For a Lickorish generator $t\in \Mod(\Sigma_g)$, we say that $t$
  \emph{passes through the $i$th hole of $\Sigma_g$} if $t$ is a Dehn
  twist along either one of the curves $\alpha_i$, $\beta_i$,
  $\gamma_i$ or $\gamma_{i-1}$ (see
  Figure~\ref{figure:lickorish-generators}).  It is easily seen from
  Figure~\ref{figure:lickorish-generators} that a Lickorish generator
  can pass through at most two holes of~$\Sigma_g$.  Therefore, the
  Lickorish generators which appear in the given word expression of
  $h$ of length $\ell$ can pass through at most $2\ell$ holes.  If
  $g>2\ell$, then for some $i$, no Lickorish generator used in $h$
  passes through the $i$th hole.  By a destabilization which removes
  the $i$th hole from $\Sigma_g$, we obtain a Heegaard splitting of
  $M$ of genus $g-1$ given by a mapping class which is a product of
  $\ell$ Lickorish generators.  By an induction, the proof is
  completed.
\end{proof}

Lickorish's work \cite{Lickorish:1962-1,Lickorish:1964-1} presents a
construction of a surgery presentation from a Heegaard splitting.
From his proof, we obtain the following:

\begin{theorem}
  \label{theorem:surgery-and-HL-complexity}
  For any closed 3-manifold $M$, $\csurg(M) \le 2\cdot\cHL(M)^2 +
  3\cdot\cHL(M)$.
\end{theorem}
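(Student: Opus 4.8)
The plan is to run Lickorish's construction of a surgery presentation from a Heegaard splitting and then count crossings carefully. Let $\ell=\cHL(M)$ and choose a mapping class $h\in\Mod(\Sigma_g)$ giving a Heegaard splitting of $M$ that is a product of $\ell$ Lickorish generators, say $h=t_1^{\epsilon_1}\cdots t_\ell^{\epsilon_\ell}$, where each $t_j$ is the Dehn twist along one of the curves $\alpha_i,\beta_i,\gamma_i$ and $\epsilon_j=\pm1$. By Lickorish's work~\cite{Lickorish:1962-1,Lickorish:1964-1}, surgery along a $\mp1$-framed push-off into $S^3$ of such a curve realizes the corresponding $\pm1$ Dehn twist on the standard Heegaard surface; hence $M$ is obtained by surgery on the framed link $L=c_1\sqcup\cdots\sqcup c_\ell$, where $c_j$ is a copy of the twisting curve of $t_j$ pushed to height $j$ in $S^3$ and framed by $-\epsilon_j=\pm1$. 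In particular $L$ has $\ell$ components, each framed $\pm1$, so that $f(L)=\ell$ and $n(L)=0$ (no component is zero framed).

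With this link in hand, the theorem reduces to the crossing bound
\[
c(L)\le \ell^2+\ell,
\]
since then $\csurg(M)\le 2c(L)+f(L)+n(L)\le 2(\ell^2+\ell)+\ell=2\ell^2+3\ell$. To produce a diagram I would draw the $\ell$ curves in the standard planar picture of $\Sigma_g$ from Figure~\ref{figure:lickorish-generators}, stacked at the distinct heights $1,\dots,\ell$ and resolving each projection crossing according to height. Invoking Lemma~\ref{lemma:heegaard-genus-bound-from-Heegaard-Lickorish-complexity}, I may assume $g\le 2\ell$, so only boundedly many holes carry curves and the picture is finite; indeed a hole through which no twisting curve passes contributes nothing. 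The crossings of the resulting diagram are of two kinds: self-crossings of a single component $c_j$, and crossings between two distinct components $c_j,c_k$.

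The core estimate is that, in this standard position, each component can be drawn with at most $2$ self-crossings and each of the $\binom{\ell}{2}$ pairs of distinct components crosses at most $2$ times. Granting this, the total crossing number is at most $2\ell+2\binom{\ell}{2}=2\ell+\ell(\ell-1)=\ell^2+\ell$, which is exactly the bound needed. The per-pair count is plausible because the Lickorish curves pairwise have geometric intersection number $0$ or $1$ on $\Sigma_g$ (for instance $\alpha_i$ meets $\beta_i$ once, $\gamma_i$ meets $\beta_i$ and $\beta_{i+1}$ once each, and the remaining standard pairs are disjoint), while two copies of the same curve are parallel and can be made disjoint in projection.

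The main obstacle is precisely this last crossing count, namely pinning down the constant $2$. Transverse intersections of the curves on the surface survive as crossings in the planar diagram, but the projection of the handlebody itself (the over/under of the front and back of each handle) can introduce extra crossings, so one must fix the standard drawing of $\alpha_i,\beta_i,\gamma_i$ so that no pair exceeds two crossings and no component self-crosses more than twice, uniformly over which holes the curves pass through and including the case of repeated, hence parallel, curves. Once this explicit picture is fixed and the crossings are tallied hole-by-hole, the inequality $c(L)\le\ell^2+\ell$ follows and the theorem is proved.
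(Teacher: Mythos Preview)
Your approach is essentially the paper's: build Lickorish's surgery link of $\ell$ components, each $\pm1$-framed, and bound the crossing number of the resulting planar diagram. The paper resolves the point you flag as ``the main obstacle'' more cleanly. In the standard picture (Figure~\ref{figure:surgery-link-example}), every Lickorish curve $\alpha_i,\beta_i,\gamma_i$ projects to an \emph{embedded} circle in the plane, so there are \emph{no} self-crossings; your allowance of two self-crossings per component is unnecessary. Two distinct components are then embedded planar circles at different heights, hence an unlink or a Hopf link, and since the underlying curves on $\Sigma_g$ meet in at most one point they cross at most twice. This yields $c(L)\le 2\binom{\ell}{2}$ directly. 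Your extra $2\ell$ self-crossings happen to land you at the same final expression $\ell(\ell+1)$ the paper records, so the arithmetic closes identically, but the underlying picture is simpler than you feared. You also do not need Lemma~\ref{lemma:heegaard-genus-bound-from-Heegaard-Lickorish-complexity}: the diagram is automatically finite because only the $\ell$ chosen curves are drawn, irrespective of~$g$.
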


\begin{proof}
  Suppose $M$ has a Heegaard splitting represented by a mapping class
  of Lickorish word length~$\ell$.  By the arguments in
  Lickorish~\cite{Lickorish:1962-1,Lickorish:1964-1} (see also
  Rolfsen's book \cite[Chapter~9, Section I]{Rolfsen:1976-1}), $M$ is
  obtained by surgery on a link $L$ with $\ell$ $(\pm1)$-framed
  components, which admits a planar diagram in which no component has
  a self-crossing and any two distinct components have at most two
  crossings between them.  See
  Figure~\ref{figure:surgery-link-example} for an example.  It follows
  that $n(L)=0$, $f(L)=\ell$, and
  $c(L)\le 2\cdot \binom{\ell}{2} = \ell(\ell+1)$.  By definition, we
  have $\csurg(M) \le 2c(L)+f(L)+n(L) \le 2\ell^2+3\ell$.
\end{proof}

\begin{figure}[ht]
  \includegraphics{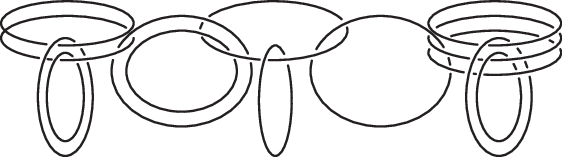}
  \caption{An example of Lickorish's surgery link.}
  \label{figure:surgery-link-example}
\end{figure}

\begin{remark}
  Conversely, a surgery presentation can be converted to a Heegaard
  splitting.  For instance, Lu's method in \cite{Lu:1992-1} tells us
  how to obtain a Heegaard splitting from a surgery link, as a product
  of explicit Dehn twists on an explicit surface.  By rewriting those
  Dehn twists in terms of the Lickorish twists, for instance by
  following the arguments of existing proofs that Lickorish twists
  generate the mapping class group (e.g, see
  \cite{Lickorish:1962-1,Lickorish:1964-1} or
  \cite{Farb-Margalit:2012-1}), one would obtain a word in the
  Lickorish twists which represents the mapping class, and in turn an
  upper bound for the Heegaard-Lickorish complexity of the 3-manifold.
  We do not address details here.
\end{remark}

\begin{remark}
  \label{remark:quadratic-bound}
  Theorem~\ref{theorem:surgery-and-HL-complexity} and (the proof of)
  Theorem~\ref{theorem:simplicial-and-surgery-complexity} immediately
  give a triangulation from a Heegaard splitting, together with the
  following complexity estimate:
  \[
  \csimp(M) \le 72\cdot (2\cdot \cHL(M)^2 + 3\cdot \cHL(M)).
  \]
  It tells us that the simplicial complexity is bounded by a
  \emph{quadratic} function in the Heegaard-Lickorish complexity.  A
  quadratic bound seems to be the best possible result \emph{from this
    method} (unless one finds a clever simplification of the
  resulting surgery link).  For instance, by generalizing the
  rightmost 5 components in Figure~\ref{figure:surgery-link-example}
  and considering the corresponding mapping class, one sees that there
  is actually a genus one mapping class of Lickorish word length $\le
  \ell$ for which the associated link $L$ has crossing number $\ge
  \frac{\ell}{2}(\frac{\ell}{2}-1)$.  In general, except for
  sufficiently small values of $\cHL$, this quadratic bound is weaker
  than the \emph{linear} bound in
  Theorem~\ref{theorem:simplicial-and-HL-complexity}\@.
\end{remark}

\begin{remark}  
  \label{remark:other-bounds-from-heegaard}
  The upper bound to the (pseudo-)simplicial complexity in terms of
  Heegaard splittings given in
  Theorem~\ref{theorem:simplicial-and-HL-complexity} is often stronger
  than Matveev's upper bound in~\cite{Matveev:1990-1, Matveev:2007-1}.
  We recall Matveev's result: suppose $M$ admits a Heegaard splitting
  $M=H_1\cup_\Sigma H_2$ with handlebodies $H_1$ and $H_2$ and
  Heegaard surface~$\Sigma$.  Let $\alpha$ and $\beta$ be the union of
  the meridian curves of $H_1$ and $H_2$ on $\Sigma$, respectively.
  Suppose $\alpha$ and $\beta$ are transverse,
  $n=\#(\alpha\cap \beta)$, and the closure of a component of
  $\Sigma\setminus (\alpha\cup\beta)$ contains $m$ points in
  $\alpha\cap\beta$.  Then $c(M) \le n-m$
  \cite[Proposition~3]{Matveev:1990-1},
  \cite[Proposition~2.1.8]{Matveev:2007-1}.  As an explicit example,
  let $\tau$, $\sigma$ be the $+1$ Dehn twists along the meridian and
  preferred longitude on the boundary of the standard solid torus in
  $S^3$, and consider the lens space $L$ with Heegaard splitting
  determined by the mapping class of $\sigma^k\tau^k$.  It is
  straightforward to see that $n=k^2+1$ and $m=4$ for this Heegaard
  splitting, so that the result in~\cite{Matveev:1990-1,
    Matveev:2007-1} gives $c(L) \le k^2-3$, a \emph{quadratic} upper
  bound.  On the other hand,
  Theorem~\ref{theorem:simplicial-and-HL-complexity} gives a
  \emph{linear} upper bound $c(L)\le\csimp(L) \le 1104k$, since
  $\sigma^k\tau^k$ has Lickorish word length $\le 2k$.  In fact, for
  arbitrary $N>0$, we can construct examples of lens spaces, using
  mapping classes of the form $(\sigma^k\tau^k)^N$ and
  $\tau^k(\sigma^k\tau^k)^N$, for which Matveev's upper bound
  $c(M) \le n-m$ has order $N$ (i.e., asymptotic growth of $k^N$)
  while Theorem~\ref{theorem:simplicial-and-HL-complexity} gives a
  linear upper bound.
\end{remark}

The rest of this section is devoted to the proof of
Theorem~\ref{theorem:simplicial-and-HL-complexity}\@.  The key idea
used in our proof below, which enables us to produce a more efficient
triangulation (cf.\ Remark~\ref{remark:quadratic-bound}), is that we
view Lickorish's surgery link
(Figure~\ref{figure:surgery-link-example}) as a link in the thickened
Heegaard surface.

\begin{proof}[Proof of
  Theorem~\ref{theorem:simplicial-and-HL-complexity}]
  Here we will prove the following statement, which is slightly
  sharper than Theorem~\ref{theorem:simplicial-and-HL-complexity}: if
  a closed $3$-manifold $M\ne S^3$ has Heegaard-Lickorish
  complexity~$\ell$, then the simplicial complexity of $M$ is not
  greater than $552\ell-120$.

  Suppose $h\in \Mod(\Sigma_g)$ gives a Heegaard splitting of a given
  $3$-manifold $M$, and suppose $h$ is a product of $\ell$ Lickorish
  generators.  Both $g$ and $\ell$ are nonzero, since $M\ne S^3$.  By
  Lickorish~\cite{Lickorish:1962-1}, $M$ is obtained by surgery on an
  $\ell$-component link $L$ in $S^3$, where each component has either
  $(+1)$ or $(-1)$-framing.  His proof tells us more about~$L$
  (another useful reference for this is \cite[Chapter~9,
  Section~I]{Rolfsen:1976-1})\@.  In fact, $L$ lies in a bicollar
  $\Sigma_g\times[-1,1]$ of $\Sigma_g$ in $S^3$, and each component is
  of the form $\alpha_i\times \{t\}$, $\beta_i\times \{t\}$, or
  $\gamma_i\times \{t\}$ for some $i$ and $t\in [-1,1]$.  An example
  is shown in Figure~\ref{figure:surgery-link-example}.  Let
  \[
    D=\Big(\bigcup_{i=1}^g \alpha_i\Big) \cup \Big(\bigcup_{i=1}^g
    \beta_i\Big) \cup \Big(\bigcup_{i=1}^{g-1} \gamma_i\Big).
  \]
  Then $L$ lies on $D\times[-1,1] \subset S^3$.



  
  Note that for a link in the bicollar $\Sigma_g\times[-1,1]$, if each
  component is regular with respect to the projection of
  $\Sigma_g\times[-1,1] \to \Sigma_g$, then the \emph{blackboard
    framing with respect to $\Sigma_g$} is well-defined; the preferred
  parallel with respect to the blackboard framing is defined to be the
  push-off along the $[-1,1]$ direction.  In particular, for our
  surgery link $L$, the blackboard framing with respect to
  $\Sigma_g$ is equal to the zero framing in~$S^3$.


  Now, in order to construct a triangulation of
  $\Sigma_g\times[-1,1] \setminus \nu(L)$, we proceed similarly to the
  proof of
  Lemma~\ref{lemma:triangulation-for-blackboard-framing-surgery}; the
  difference is that we now use a ``diagram'' on $\Sigma_g$, instead
  of a planar link diagram.  Let $G_0$ be the dual graph of $D$
  on~$\Sigma_g$.  Let $G$ be the graph shown in
  Figure~\ref{figure:dual-graph-with-added-parallels}, which is
  obtained by adding parallel edges to~$G_0$.

  \begin{figure}[H]
    \labellist
    \small
    \pinlabel{(top view)} at 171 131
    \pinlabel{(bottom view)} at 171 5
    \pinlabel{$D$} at 90 200
    \pinlabel{$G$} at 80 162
    \pinlabel{$D$} at 90 73
    \pinlabel{$G$} at 80 100
    \endlabellist
    \includegraphics{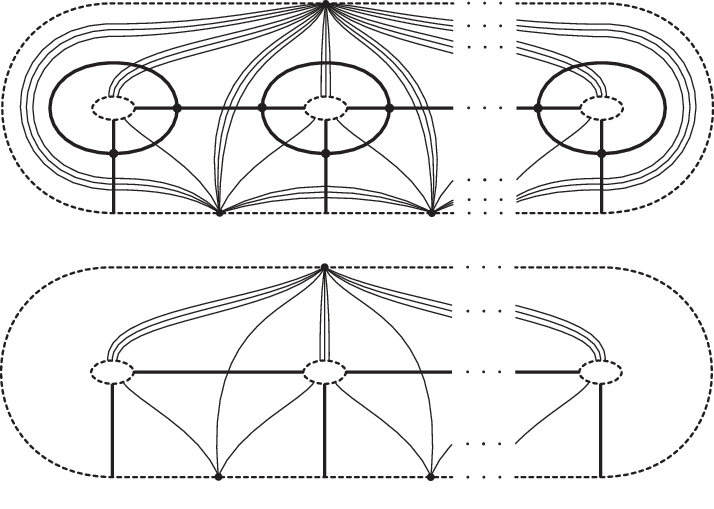}
    \caption{The graphs $D$ and $G$ on $\Sigma_g$, which are depicted
      in thick and thin edges respectively.}
    \label{figure:dual-graph-with-added-parallels}
  \end{figure}

  Note that for each of the curves $\alpha_i$, $\beta_i$ and
  $\gamma_i$, an edge of $G_0$ dual to the curve is chosen and two
  parallels of the chosen edge are added to produce~$G$.  Each region
  of $G$ is a quadrangle or a bigon.  (Each quadrangle/bigon has no
  two edges which are identified, while vertices are allowed to be
  identified; using this, it can be verified that our construction
  described below gives a simplicial complex structure in which each
  tetrahedron has no identified vertices and is uniquely determined by
  its vertices.)

  Cutting $\Sigma_g\times[-1,1]\setminus \nu(L)$ along
  $G\times[-1,1]$, we obtain pieces corresponding to quadrangle
  regions and bigon regions; call them type (i) and (ii) respectively.
  See the left of Figure~\ref{figure:surgery-link-decomposition}.
  Cutting along $D\times[-1,1]$, a type (i) piece is divided into four
  cubic subpieces, and a type (ii) piece is divided into two
  triangular prism subpieces.  See the middle of
  Figure~\ref{figure:surgery-link-decomposition}.  Hatched quadrangles
  represent~$\partial\nu(L)$.

  \begin{figure}[ht]
    \labellist
    \pinlabel{$=$ a cone of} at 235 250
    \pinlabel{$=$ a cone of} at 235 70
    \small
    \pinlabel{type (i)} at 90 160
    \pinlabel{type (ii)} at 90 15
    \endlabellist
    \includegraphics{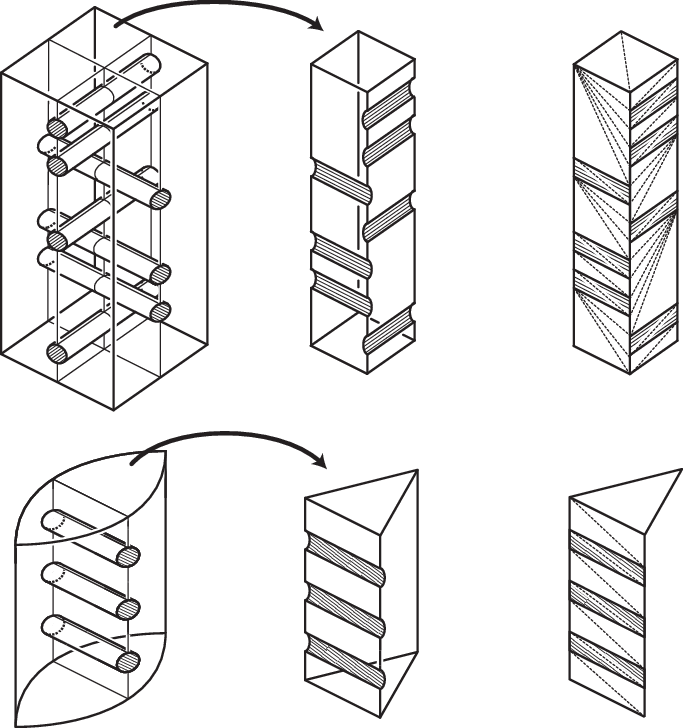}
    \caption{Decomposition of the surgery link exterior pieces.}
    \label{figure:surgery-link-decomposition}
  \end{figure}

  For a type (i) subpiece, triangulate the three front faces of each
  subpiece as in the top right of
  Figure~\ref{figure:surgery-link-decomposition}, and then triangulate
  the subpiece by taking a cone at the opposite vertex, as we did in
  the proof of
  Lemma~\ref{lemma:triangulation-for-blackboard-framing-surgery}.  We
  claim that there are $6k+6$ tetrahedra in this subpiece
  triangulation, where $k$ is the number of hatched quadrangles in the
  subpiece.  The number of tetrahedra in the subpiece is equal to the
  number of triangles in the three front faces.  There are two
  triangles in the top face.  To count triangles in the remaining two
  faces, observe that the front middle vertical edge is divided into
  $2k+1$ $1$-simplices.  There are $4k+2$ triangles that have one of
  these $1$-simplices as an edge, and there are $2k+2$ remaining
  triangles.  Therefore there are total $6k+6$ triangles, as we
  claimed.

  A type (ii) subpiece is triangulated similarly, as depicted in the
  bottom of Figure~\ref{figure:surgery-link-decomposition}.  When a
  type (ii) subpiece has $k$ hatched quadrangles, its triangulation
  has $4k+3$ tetrahedra.

  Combining the triangulations of the subpieces, we obtain a
  triangulation of $\Sigma_g\times[-1,1] \setminus \nu(L)$.  To
  estimate the number of tetrahedra, first observe that the graph $D$
  has $3g-2$ vertices, where $g$ is the genus of the Heegaard
  surface~$\Sigma_g$.  Therefore its dual graph $G_0$ has $3g-2$
  quadrangular regions.  Since $2(3g-1)$ parallel edges have been added
  to $G_0$ and each of them introduces a bigon region, the graph $G$
  has $3g-2$ quadrangular regions and $6g-2$ bigon regions.  It
  follows that there are $12g-8$ type (i) subpieces and $12g-4$ type
  (ii) subpieces in $\Sigma_g\times[-1,1] \setminus \nu(L)$.  Also,
  observe that each component of $L$ passes through type (i)
  pieces at most three times, and type (ii) pieces twice.
  Therefore a component can contribute at most $4\cdot 3=12$ hatched
  quadrangles in type (i) subpieces, and $2\cdot 2=4$ hatched
  quadrangles in type (ii) subpieces.  It follows that there are at
  most
  \[
  6\cdot 12\ell + 6\cdot (12g-8) + 4\cdot 4\ell+3\cdot(12g-4) = 88\ell+108g-60
  \]
  tetrahedra in our triangulation of
  $\Sigma_g\times[-1,1] \setminus \nu(L)$.

  For later use, note that our triangulation restricted to
  $\Sigma_g\times\{t\}$ ($t=\pm1$) has $2(12g-8)+(12g-4)=36g-20$
  triangles, since the top face of each of the $12g-8$ type (i)
  subpieces consists of two triangles, and the top of each of the
  $12g-4$ type (ii) subpieces is a single triangle.

  Now we triangulate the inner and outer handlebodies, which are the
  components of $S^3\setminus (\Sigma_g\times(-1,1))$.  First we
  consider the outer handlebody.  Choose disjoint disks
  $D_0,D_1,\ldots,D_g$ in the outer handlebody such that
  $\partial D_i=\alpha_i$ for $i=1,\ldots,g$, and $\partial D_0$ is
  the union of the outermost edges of the graph $G$ in the top view of
  Figure~\ref{figure:dual-graph-with-added-parallels}; $\partial D_0$
  is parallel to the outer dotted circle in
  Figure~\ref{figure:dual-graph-with-added-parallels}.  Our
  triangulation on $\Sigma_g\times\{1\}$ divides $\partial D_0$ into
  $2g$ edges, each of $\partial D_1$ and $\partial D_g$ into 6 edges,
  and each $\partial D_i$ ($i=2,\ldots,g-1$) into 8 edges.  Extending
  this triangulation of the boundary, we triangulate $D_0$ into
  $2g-2$ triangles, each of $D_1$, $D_g$ into 4 triangles, and each
  $D_i$ ($i=2,\ldots,g-1$) into 6 triangles, by drawing edges joining
  vertices.  Cutting the outer handlebody along the disks $D_0$,
  $\ldots$, $D_g$, we obtain two $3$-balls $B_1$ and $B_2$.  Our
  triangulations of the $D_i$ and $\Sigma_g\times\{1\}$ give
  triangulations of $\partial B_1$ and~$\partial B_2$.  Triangulate
  each of $B_1$ and $B_2$ by taking the cone of the boundary.  Note
  that a triangle in $\Sigma\times\{1\}$ contributes one tetrahedron
  to $B_1\cup B_2$, while a triangle in $D_i$ contributes two
  tetrahedra to $B_1\cup B_2$.  It follows that the outer handlebody
  has at most $(36g-20) + 2\cdot(2g-2+6g-4) = 52g-32$ tetrahedra.

  For the inner handlebody, choose disjoint disks
  $D'_1,\ldots,D'_g, D''_1,\ldots,D''_{g-1}$ in the inner handlebody
  such that $\partial D'_i = \beta_i$ and $\partial D''_i=\gamma_i$.
  Similarly to the case of the disks $D_i$ above, our triangulation
  extends to $(\bigcup D'_i) \cup (\bigcup D''_j)$ where $D'_i$ and
  $D''_i$ are decomposed to 2 and 4 triangles respectively.  Cutting
  the inner handlebody along the disks $D'_i$ and $D''_i$, we obtain
  $g$ 3-balls.  Triangulate each 3-ball by taking the cone of the
  boundary.  A counting argument similar to the above shows that the
  inner handlebody has $(36g-20)+2(2g+4(g-1)) = 48g-28$ tetrahedra.
  

  To obtain the surgery manifold, attach and triangulate Dehn filling
  tori as in the proof of
  Lemma~\ref{lemma:triangulation-for-blackboard-framing-surgery}.
  Recall that the blackboard framing is equal to the zero framing in
  the present case.  Since each component of $L$ passes through two
  type (ii) pieces each of which introduces a half twist with respect
  to the blackboard framing, each Dehn filling torus can be assumed to
  be attached along the given $(\pm1)$-framing of $L$, by
  appropriately choosing diagonal edges used to triangulate the
  hatched quadrangles of type (ii) pieces in
  Figure~\ref{figure:surgery-link-decomposition}.  Therefore the
  surgery manifold is equal to the given~$M$.  Since there are at most
  $16\ell$ hatched quadrangles and each hatched quadrangle contributes
  a triangular prism which consists of $3$ tetrahedra in the Dehn
  filling tori, there are at most $48\ell$ tetrahedra in the Dehn
  filling tori.

  It follows that our triangulation of the surgery manifold $M$ has at
  most
  \[
  (88\ell+108g-60) + (52g-32) + (48g-28) + 48\ell = 136\ell+208g-120
  \]
  tetrahedra.  By
  Lemma~\ref{lemma:heegaard-genus-bound-from-Heegaard-Lickorish-complexity},
  we may assume that $g\le 2\ell$.  It follows that the simplicial
  complexity of $M$ is at most~$552\ell-120$.
\end{proof}

\section{Theorems~\ref{theorem:simplicial-and-HL-complexity}
  and~\ref{theorem:simplicial-and-surgery-complexity} are
  asymptotically optimal}
\label{section:linear-bounds-are-optimal}

In this section we prove
Theorem~\ref{theorem:linear-bounds-are-optimal} and related results.
For this purpose we use some results in~\cite{Cha:2014-1}.  First, we
need the following lower bound of the simplicial complexity.
In~\cite{Cheeger-Gromov:1985-1}, Cheeger and Gromov introduced the von
Neumann $L^2$ $\rho$-invariant $\rhot(M,\phi)\in \R$ which is defined
for a smooth closed $(4k-1)$-manifold $M$ and a homomorphism
$\phi\colon \pi_1(M)\to G$.  By deep analytic arguments, they showed
that for each $M$ there is a universal bound for the values of
$\rho(M,\phi)$~\cite{Cheeger-Gromov:1985-1}; that is, there is $C_M>0$
satisfying that $|\rhot(M,\phi)|\le C_M$ for any~$\phi$.
In~\cite{Cha:2014-1}, a topological approach to the universal bound
for $\rhot(M,\phi)$ is presented, and in particular, an explicit
linear universal bound is given in terms of the simplicial complexity
of 3-manifolds:

\begin{theorem}[{\cite[Theorem~1.5]{Cha:2014-1}}]
  \label{theorem:simplicial-complexity-unversal-bound}
  Suppose $M$ is a closed 3-manifold.  Then
  \[
  |\rhot(M,\phi)| \le 363090 \cdot \csimp(M)
  \]
  for any homomorphism~$\phi$.
\end{theorem}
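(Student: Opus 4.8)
The plan is to realize $\rhot(M,\phi)$ as a signature defect over a bounding $4$-manifold and then to bound that defect linearly in the number of tetrahedra of a triangulation of $M$. Concretely, suppose $M=\partial W$ for a compact oriented $4$-manifold $W$, and suppose $\phi$ extends to a homomorphism $\psi\colon\pi_1(W)\to G$. Then the $L^2$-signature formula of Cheeger--Gromov gives
\[
\rhot(M,\phi) = \lsign_G(W,\psi) - \sign(W),
\]
where $\lsign_G(W,\psi)$ denotes the von Neumann signature of the $\psi$-twisted intersection form on $H_2(W;\N G)$. The first reduction is therefore to produce such a pair $(W,\psi)$ from a triangulation of $M$, with good control on the topology of the $4$-manifold.

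The extension of $\phi$ I would handle group-theoretically, so that no generality is lost. Every triangulated $M$ bounds some $W$; fixing one, I would extend $\phi$ over $\pi_1(W)$ by passing to the amalgamated product $\Gamma = G *_{\pi_1(M)} \pi_1(W)$, into which $G$ injects and over which $\phi$ manifestly extends. Since an injection of groups induces a trace-preserving inclusion of the associated von Neumann algebras, the $L^2$-signature, and hence $\rhot(M,\phi)$, is unchanged when computed over $\Gamma$ in place of $G$. This permits me to assume from the outset that $\phi$ extends over whatever bounding $4$-manifold I construct.

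The key inequality is that the signature defect is controlled by the size of the middle-dimensional homology. The ordinary signature satisfies $|\sign(W)|\le \rank H_2(W;\Q)$, while the von Neumann signature satisfies $|\lsign_G(W,\psi)|\le \ldim H_2(W;\N G)$; and because $H_2(W;\N G)$ is a subquotient of the $\N G$-module of $2$-chains, $\ldim H_2(W;\N G)$ is at most the number of $2$-handles in a handle decomposition of $W$. Hence
\[
|\rhot(M,\phi)| \le |\lsign_G(W,\psi)| + |\sign(W)| \le 2\cdot(\#\,2\text{-handles of }W).
\]
The problem is now reduced to bounding the number of $2$-handles, uniformly in $\phi$.

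The heart of the argument, and the step I expect to be the main obstacle, is the explicit geometric construction of a bounding $4$-manifold $W$ whose number of $2$-handles grows only linearly in $\csimp(M)$, together with the bookkeeping that yields the explicit constant. I would build $W$ directly from the triangulation by thickening it and filling in along cells in a controlled fashion, so that each tetrahedron contributes only a bounded number of handles of each index; the $2$-handle count then satisfies $\#\,2\text{-handles}\le C\cdot\csimp(M)$ for an explicit $C$, and combining with the displayed inequality yields $|\rhot(M,\phi)|\le 2C\cdot\csimp(M)$, with tracking of the various contributions producing the stated constant $363090$. Keeping the construction efficient while ensuring $\phi$ remains extendable is the delicate point, and the large explicit constant reflects the cost of this controlled filling.
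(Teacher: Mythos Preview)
The paper under review does not prove this theorem; it quotes it verbatim from \cite{Cha:2014-1} and uses it as a black box in Section~\ref{section:linear-bounds-are-optimal}. So there is no in-paper proof to compare against, and your proposal is really a sketch of the argument of~\cite{Cha:2014-1}. In broad outline you have the right strategy: express $\rhot(M,\phi)$ as a signature defect $\lsign_\Gamma(W)-\sign(W)$ over a bounding $4$-manifold, bound the defect by twice the number of $2$-handles, and then build $W$ from a triangulation of $M$ so that the $2$-handle count is linear in~$\csimp(M)$. You also correctly flag the last step as the main content.

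There is, however, a genuine gap in your extension step. You assert that for any bounding $W$ the group $G$ injects into the pushout $\Gamma = G *_{\pi_1(M)} \pi_1(W)$. This is false in general: if $K=\ker\big(\pi_1(M)\to\pi_1(W)\big)$, then $\phi(K)$ is killed in $\Gamma$, so $G\hookrightarrow\Gamma$ holds only when $K\subset\ker\phi$. For a generic $W$ bounding $M$ (for instance one obtained by naive capping of a handle decomposition) this inclusion of kernels has no reason to hold, and without injectivity of $G\to\Gamma$ you cannot invoke the induction property of $\rhot$ to pass from $G$ to~$\Gamma$. In~\cite{Cha:2014-1} this issue is not sidestepped by an algebraic trick on an arbitrary $W$; rather, the controlled $4$-manifold is built so that the extension problem is solved simultaneously with the complexity bound (using, among other things, functorial embeddings into acyclic groups and an explicit chain-level bordism construction). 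Your sketch conflates ``some $W$ bounds $M$'' with ``some $W$ bounds $M$ and $\phi$ extends over $\pi_1(W)$ with $G$ injecting,'' and it is precisely the cost of achieving the latter, with linear control on the $2$-handles, that produces the large constant~$363090$.
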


In this paper, we will use the Cheeger-Gromov $\rho$-invariant as a
lower bound of the simplicial complexity.

For the lens space $L(n,1)$ and the identity map
$\id\colon \pi_1(L(n,1)) \to \Z_n$ ($n>0$), Lemma~7.1
of~\cite{Cha:2014-1} gives the following value of the Cheeger-Gromov
$\rho$-invariant, using the computation of
Atiyah-Patodi-Singer~\cite[p.~412]{Atiyah-Patodi-Singer:1975-2}:
\[
\displaystyle \rhot(L(n,1)),\id) = \frac{n}{3} + \frac{2}{3n} - 1.
\]
From this and
Theorem~\ref{theorem:simplicial-complexity-unversal-bound}, a lower
bound of the simplicial complexity of $L(n,1)$ is obtained.  We state
it as a lemma:

\begin{lemma}
  \label{lemma:lens-space-complexity}
  $\displaystyle \csimp(L(n,1)) \ge \frac{n-3}{1089270}.$
\end{lemma}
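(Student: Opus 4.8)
The plan is to simply combine the universal bound of Theorem~\ref{theorem:simplicial-complexity-unversal-bound} with the explicit value of the Cheeger-Gromov $\rho$-invariant recorded just above. Applying Theorem~\ref{theorem:simplicial-complexity-unversal-bound} to $M=L(n,1)$ and the homomorphism $\phi=\id\colon\pi_1(L(n,1))\to\Z_n$ gives
\[
|\rhot(L(n,1),\id)| \le 363090\cdot \csimp(L(n,1)),
\]
so, since $3\cdot 363090 = 1089270$, it suffices to bound the left-hand side below by $(|n|-3)/3$.

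Next I would substitute the stated value $\rhot(L(n,1),\id)=\frac{n}{3}+\frac{2}{3n}-1$ and estimate its absolute value from below. The key elementary observation is that the term $\frac{2}{3n}$ always carries the same sign as $n$, so it never decreases the magnitude of the expression. Concretely, for $n>0$ one has $\frac{n}{3}+\frac{2}{3n}-1 > \frac{n}{3}-1 = \frac{n-3}{3}$, since a positive quantity is discarded; and for $n<0$ every summand is negative, so the absolute value exceeds $\frac{|n|}{3}-1$. In either case $|\rhot(L(n,1),\id)| \ge \frac{|n|-3}{3}$.

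Combining the two displays then yields $\csimp(L(n,1)) \ge \frac{|n|-3}{1089270}$, as desired. I would also remark that when $|n|\le 3$ the asserted inequality is vacuous, because its right-hand side is nonpositive whereas $\csimp\ge 0$; thus the real content is the case $|n|>3$, which is exactly the range used in Theorem~\ref{theorem:linear-bounds-are-optimal}.

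There is no genuine obstacle here: the only thing to get right is the sign bookkeeping in the elementary estimate of the middle step. All the substantive analytic and topological work has already been carried out, by Theorem~\ref{theorem:simplicial-complexity-unversal-bound} and by the Atiyah-Patodi-Singer computation of $\rhot(L(n,1),\id)$ imported through Lemma~7.1 of~\cite{Cha:2014-1}, so the proof is a short deduction from those inputs.
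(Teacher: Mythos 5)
Your proof is correct and follows exactly the route the paper intends: apply Theorem~\ref{theorem:simplicial-complexity-unversal-bound} to $(L(n,1),\id)$, substitute the Atiyah--Patodi--Singer value of $\rhot$, and note that the term $\frac{2}{3n}$ has the same sign as $\frac{n}{3}$ so that $|\rhot(L(n,1),\id)|\ge\frac{|n|-3}{3}$, whence dividing by $363090$ gives the stated bound with $1089270=3\cdot 363090$. The paper leaves this elementary sign bookkeeping implicit, so your write-up is if anything slightly more complete than the original.
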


We remark that a pseudo-simplicial complexity analogue is given in
\cite[Corollary~1.15]{Cha:2014-1}.

Now we are ready to proof
Theorem~\ref{theorem:linear-bounds-are-optimal}\@.  In fact, the
following stronger inequalities hold, and
Theorem~\ref{theorem:linear-bounds-are-optimal} follows immediately
from them.

\begin{theorem}
  \label{theorem:lens-space-inequalities}
  \begin{gather*}
    \frac{1}{1089720} \cdot \Big( 1-\frac{3}{n} \Big) \cdot
    \cHL(L(n,1)) \le \csimp(L(n,1)),
    \\
    \frac{1}{1089720} \cdot \Big( 1-\frac{3}{n} \Big) \cdot
    \csurg(L(n,1)) \le \csimp(L(n,1)).    
  \end{gather*}
\end{theorem}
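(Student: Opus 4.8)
The plan is to pair the lower bound for $\csimp(L(n,1))$ furnished by Lemma~\ref{lemma:lens-space-complexity} with two elementary \emph{upper} bounds, one for $\cHL(L(n,1))$ and one for $\csurg(L(n,1))$. Since the two asserted inequalities have identical shape, it is enough to prove that each of $\cHL(L(n,1))$ and $\csurg(L(n,1))$ is at most $|n|$, and then to insert this into the bound $\csimp(L(n,1)) \ge (|n|-3)/1089270$ of Lemma~\ref{lemma:lens-space-complexity}.

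First I would record the two upper bounds. The lens space $L(n,1)$ is obtained by $n$-framed surgery on the unknot $U$; here $c(U)=0$, the framing contributes $f(U)=|n|$, and since the framing is $\pm n\neq0$ the single component is not a split unknotted \emph{zero}-framed component, so $n(U)=0$. Hence by definition
\[
\csurg(L(n,1)) \le 2c(U)+f(U)+n(U) = |n|.
\]
For the Heegaard-Lickorish complexity, $L(n,1)$ admits a genus-one Heegaard splitting whose gluing map is the $|n|$-th power of a single Lickorish Dehn twist on $\Sigma_1$ (this is the genus-one splitting already noted in Section~\ref{section:triangulation-from-heegaard-splitting}); being a product of $|n|$ Lickorish generators, it gives
\[
\cHL(L(n,1)) \le |n|.
\]

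Finally I would combine these bounds. Let $X$ denote either $\cHL(L(n,1))$ or $\csurg(L(n,1))$, so that $X\le|n|$. If $|n|\le 3$ the factor $1-3/|n|$ is nonpositive while $X\ge0$ and $\csimp(L(n,1))\ge0$, so the claim is trivially true. If $|n|>3$ the factor $1-3/|n|$ is positive, and using $X\le|n|$ together with the inequality $1089720>1089270$ of constants, Lemma~\ref{lemma:lens-space-complexity} yields
\[
\frac{1}{1089720}\Big(1-\frac{3}{|n|}\Big)X \le \frac{|n|-3}{1089720} \le \frac{|n|-3}{1089270} \le \csimp(L(n,1)).
\]
Taking $X=\cHL(L(n,1))$ and then $X=\csurg(L(n,1))$ gives the two stated inequalities. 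The argument itself is elementary; the real content sits in Lemma~\ref{lemma:lens-space-complexity}, which rests on the Cheeger--Gromov $\rho$-invariant of $L(n,1)$ and Theorem~\ref{theorem:simplicial-complexity-unversal-bound}. Accordingly, the only delicate points here are checking the two linear upper bounds $\cHL,\csurg\le|n|$ and noticing that the slightly larger constant $1089720$ (versus $1089270$) is precisely what creates the room needed to absorb the factor $1-3/|n|$.
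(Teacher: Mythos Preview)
Your proof is correct and follows essentially the same route as the paper: bound $\cHL(L(n,1))$ and $\csurg(L(n,1))$ above by $|n|$ using the standard genus-one splitting and $n$-surgery on the unknot, then invoke Lemma~\ref{lemma:lens-space-complexity}. Your added case split for $|n|\le 3$ and the explicit inequality chain make the deduction more transparent than the paper's one-line version. One small expository point: your closing remark that the constant $1089720$ ``creates the room needed to absorb the factor $1-3/|n|$'' is not quite right---the factor is already absorbed by $X\le|n|$, since $(1-3/|n|)\cdot|n|=|n|-3$, so the constant $1089270$ would work equally well; the $1089720$ in the statement appears to be a typo rather than a deliberate slack.
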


\begin{proof}
  Since $L(n,1)$ is obtained by the $n$-framed surgery on the unknot,
  it is easily seen that $\cHL(M)$, $\csurg(M) \le n$.  The desired
  inequalities follow from this and
  Lemma~\ref{lemma:lens-space-complexity}.
\end{proof}

In what follows we discuss a generalization and a specialization of
the lens space case we considered in
Theorem~\ref{theorem:lens-space-inequalities}.

First, the second inequality in
Theorem~\ref{theorem:lens-space-inequalities} generalizes for a larger
class of 3-manifolds.  For a knot $K$ in $S^3$, let $M(K,n)$ be the
3-manifold obtained by $n$-framed surgery on~$K$.  Let $g_4(K)$ be the
(topological) slice genus of~$K$.

\begin{theorem}
  \label{theorem:surgery-manifold-inequality}
  For any $n\ne 0$, 
  \[
  \frac{1}{1089720} \cdot \Big( 1-\frac{3+6g_4(K)}{|n|} \Big) \cdot
  \big(\csurg(M(K,n))-2c(K)\big) \le \csimp(M(K,n)).
  \]
\end{theorem}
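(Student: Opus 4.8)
The plan is to run the proof of Theorem~\ref{theorem:lens-space-inequalities} with the unknot replaced by $K$, absorbing the effect of the knot into an error term controlled by $g_4(K)$. Write $\phi\colon\pi_1(M(K,n))\to H_1(M(K,n))=\Z_n$ for the abelianization, where $\Z_n$ is generated by the image of a meridian of~$K$. The first, routine, ingredient is a surgery bound: $n$-framed surgery on a minimal diagram of $K$ presents $M(K,n)$ as surgery on a framed link $L$ with $c(L)=c(K)$, $f(L)=|n|$ and $n(L)=0$, so $\csurg(M(K,n))\le 2c(K)+|n|$, that is $\csurg(M(K,n))-2c(K)\le |n|$. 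This plays the role of $\csurg(L(n,1))\le n$ in the lens space case.

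The heart of the matter is the comparison of Cheeger--Gromov invariants
\[
\bigl|\rhot(M(K,n),\phi)-\rhot(L(n,1),\id)\bigr|\le 2g_4(K).
\]
I would prove it by a signature-defect computation. Since $\Z_n$ is finite, $\rhot(M(K,n),\phi)$ equals the average $\frac1n\sum_{j=0}^{n-1}\rho_{\chi_j}(M(K,n))$ of the Atiyah--Patodi--Singer $\rho$-invariants over the characters $\chi_j\colon\Z_n\to S^1$, $\chi_j(1)=e^{2\pi ij/n}$, and similarly for $L(n,1)$ (the $j=0$ terms vanish). A standard surgery computation, of the kind used for the lens space value in~\cite{Cha:2014-1}, identifies the difference $\rho_{\chi_j}(M(K,n))-\rho_{\chi_j}(L(n,1))$ with (minus) the Tristram--Levine signature $\sigma_K(e^{2\pi ij/n})$. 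Geometrically, a genus $g_4(K)$ surface in $S^3\times[0,1]$ cobounding $K\times 0$ and an unknot at $S^3\times 1$, with an $n$-framed $2$-handle cobordism attached along it, gives a cobordism from $M(K,n)$ to $L(n,1)$ over which $\phi$ extends; its $L^2$-signature defect is exactly the averaged signature, and the genus of the surface bounds the rank of the relevant twisted intersection form. Invoking the classical topological slice-genus bound $|\sigma_K(\omega)|\le 2g_4(K)$, valid uniformly in $\omega$, and averaging yields the displayed inequality.

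Granting this, the simplicial lower bound is immediate. From $\rhot(L(n,1),\id)=\frac{n}{3}+\frac{2}{3n}-1$ one checks $|\rhot(L(n,1),\id)|\ge\frac{|n|-3}{3}$ for every $n\ne 0$, whence
\[
|\rhot(M(K,n),\phi)|\ge \frac{|n|-3}{3}-2g_4(K)=\frac{|n|-3-6g_4(K)}{3}.
\]
Plugging this into the universal bound of Theorem~\ref{theorem:simplicial-complexity-unversal-bound} gives $\csimp(M(K,n))\ge\frac{|n|-3-6g_4(K)}{1089270}$, exactly as Lemma~\ref{lemma:lens-space-complexity} does for $K$ the unknot. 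To finish, I combine this with the surgery bound $\csurg(M(K,n))-2c(K)\le|n|$. In the principal regime, where $|n|\ge 3+6g_4(K)$ so the parenthetical factor is nonnegative and $\csurg(M(K,n))-2c(K)\ge 0$, monotonicity gives
\[
\frac{1}{1089720}\Bigl(1-\tfrac{3+6g_4(K)}{|n|}\Bigr)\bigl(\csurg(M(K,n))-2c(K)\bigr)\le \frac{|n|-3-6g_4(K)}{1089720}\le \frac{|n|-3-6g_4(K)}{1089270}\le \csimp(M(K,n)).
\]
When the factor or $\csurg(M(K,n))-2c(K)$ is negative the left side is $\le 0\le\csimp(M(K,n))$; the only remaining corner, $|n|\le 3+6g_4(K)$ together with $\csurg(M(K,n))<2c(K)$, is handled by a direct elementary estimate.

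The main obstacle is the signature-defect identity together with its genus bound: one has to set up the cobordism and its coefficient system so that the $L^2$-signature defect is genuinely the averaged Tristram--Levine signature, and to use the \emph{topological} (not merely smooth) slice-genus inequality $|\sigma_K(\omega)|\le 2g_4(K)$ uniformly in $\omega$, which is what makes $g_4$ the topological slice genus in the statement. The remaining work—the three-term algebra and the sign bookkeeping for the parenthetical factor—is routine once the comparison inequality is in place.
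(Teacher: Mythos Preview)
Your argument is correct and follows the same route as the paper: the $\rho$-invariant lower bound $|\rhot(M(K,n),\phi)|\ge\frac{1}{3}(|n|-3-6g_4(K))$, combined with Theorem~\ref{theorem:simplicial-complexity-unversal-bound} and the elementary surgery bound $\csurg(M(K,n))\le 2c(K)+|n|$. The only differences are that the paper obtains the $\rho$-invariant estimate by direct citation to \cite[Equation~(2.8)]{Cha:2015-2} rather than deriving it via the lens-space comparison and Tristram--Levine signature bound you sketch, and the paper elides your final case analysis with a one-line ``the desired inequality follows.''
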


\begin{proof}
  Let $\phi\colon \pi_1(M(K,n)) \to \Z_{|n|}$ be the abelianization.
  Due to~\cite[Equation (2.8)]{Cha:2015-2},
  \[
  |\rhot(M(K,n),\phi)| \ge \frac{1}{3} \cdot (|n|-3-6g_4(K)).
  \]
  By Theorem~\ref{theorem:simplicial-complexity-unversal-bound}, it
  follows that
  \begin{equation}
    \label{equation:csimp-knot-surgery}
    \csimp(M(K,n)) \ge \frac{1}{1089270} \cdot (|n|-3-6g_4(K)).
  \end{equation}
  By definition, $\csurg(M(K,n)) \le 2c(K) + |n|$.  From this and
  \eqref{equation:csimp-knot-surgery}, the desired inequality follows.
\end{proof}

On the other hand, if we consider the special case of lens spaces
$L(2k,1)$, then the inequalities in
Theorem~\ref{theorem:lens-space-inequalities} (and hence those in
Theorem~\ref{theorem:linear-bounds-are-optimal}) can be improved
significantly as follows.

\begin{theorem}
  For $k>1$, the following hold:
  \begin{gather*}
    \Big( 1-\frac{3}{2k\mathstrut} \Big) \cdot \cHL(L(2k,1)) \le
    \csimp(L(2k,1)),
    \\
    \Big( 1-\frac{3}{2k} \Big) \cdot \csurg(L(2k,1)) \le
    \csimp(L(2k,1)).
  \end{gather*}
\end{theorem}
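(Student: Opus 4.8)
The plan is to combine two elementary upper bounds with one sharp lower bound. On the one hand, exactly as in the proof of Theorem~\ref{theorem:lens-space-inequalities}, realizing $L(2k,1)$ as the $2k$-framed surgery on the unknot gives $\cHL(L(2k,1)) \le 2k$ and $\csurg(L(2k,1)) \le 2k$. On the other hand, I would establish the sharp lower bound $\csimp(L(2k,1)) \ge 2k-3$. Granting these, the conclusion is immediate: since $k>1$ makes $1 - \tfrac{3}{2k} > 0$, we have
\[
\Big(1 - \frac{3}{2k}\Big)\cHL(L(2k,1)) \le \Big(1 - \frac{3}{2k}\Big)\cdot 2k = 2k-3 \le \csimp(L(2k,1)),
\]
and the identical computation with $\csurg$ in place of $\cHL$ gives the second inequality.

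The entire content therefore lies in the lower bound $\csimp(L(2k,1)) \ge 2k-3$, and the first thing to note is that this cannot come from the Cheeger-Gromov estimate of Theorem~\ref{theorem:simplicial-complexity-unversal-bound}. That route, used in Lemma~\ref{lemma:lens-space-complexity}, loses a factor of order $10^6$ through the constant $363090$, whereas we now need a bound that is sharp up to an additive constant. Instead I would pass to the pseudo-simplicial complexity. Recall from the introduction that every triangulation is in particular a pseudo-simplicial triangulation, so $c(L(2k,1)) \le \csimp(L(2k,1))$. It thus suffices to prove the sharp lower bound $c(L(2k,1)) \ge 2k-3$ for the pseudo-simplicial complexity.

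For this I would invoke the exact computation of the minimal complexity of this infinite family. Since $2k \ge 4$, the manifold $L(2k,1)$ is none of the exceptional cases $S^3$, $\R P^3$, $L(3,1)$, so $c(L(2k,1))$ agrees with Matveev's complexity, and by Jaco--Rubinstein--Tillmann~\cite{Jaco-Rubinstein-Tillman:2009-1} one has $c(L(2k,1)) = 2k-3$; here the upper bound is realized by an explicit layered triangulation, while the matching lower bound is the substantial part of their work. In particular $c(L(2k,1)) \ge 2k-3$, which together with $c(L(2k,1)) \le \csimp(L(2k,1))$ yields the desired $\csimp(L(2k,1)) \ge 2k-3$.

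The main obstacle is precisely this sharp lower bound $c(L(2k,1)) \ge 2k-3$. It is a genuinely hard, manifold-specific statement, established through normal-surface and layered-triangulation arguments rather than by any of the general estimates in the present paper, and its availability only for the even family $L(2k,1)$ is exactly why the improvement in this theorem is restricted to these lens spaces, in contrast to the uniform but lossy bound of Theorem~\ref{theorem:lens-space-inequalities} coming from the general Cheeger-Gromov estimate.
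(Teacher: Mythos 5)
Your proposal is correct and follows the paper's own argument exactly: the upper bounds $\cHL(L(2k,1)),\csurg(L(2k,1))\le 2k$ from the $2k$-framed unknot surgery, combined with the Jaco--Rubinstein--Tillmann computation $c(L(2k,1))=2k-3$ and the inequality $c(M)\le\csimp(M)$ in place of the Cheeger--Gromov lower bound. No further comment is needed.
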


\begin{proof}
  Due to Jaco, Rubinstein, and Tillman
  \cite{Jaco-Rubinstein-Tillman:2009-1}, the pseudo-simplicial
  complexity of $L(2k,1)$ is equal to $2k-3$ for $k>1$, and
  consequently $\csimp(L(2k,1)) \ge 2k-3$.  Using this in place of
  Lemma~\ref{lemma:lens-space-complexity} in the proof of
  Theorem~\ref{theorem:lens-space-inequalities}, we obtain the
  inequalities.
\end{proof}

We finish this section with a proof of
Theorem~\ref{theorem:asymptotic-growth}.

\begin{proof}[Proof of Theorem~\ref{theorem:asymptotic-growth}]
  Recall the definition of the ``largest possible value'' of the
  simplicial complexity for Heegaard-Lickorish complexity $\le \ell$:
  \[
  \sHL(\ell) := \sup\{\csimp(M) \mid \cHL(M)\le \ell\}.
  \]
  The first assertion of Theorem~\ref{theorem:asymptotic-growth},
  which says $\sHL(\ell)\in O(\ell)\cap \Omega(\ell)$, follows
  immediately from the estimate
  \begin{equation}
    \label{equation:sHL-estimate}
    \frac{1}{1089270} \le \limsup_{\ell \to \infty}
    \frac{\sHL(\ell)}{\ell} \le 552.
  \end{equation}
  which we prove in what follows.

  Fix~$\ell$.  For any $M$ with $\cHL(M) \le \ell$, we have
  \[
  \frac{\csimp(M)}{\ell} \le \frac{\csimp(M)}{\cHL(M)} \le 552
  \]
  by Theorem~\ref{theorem:simplicial-and-HL-complexity}\@.  Taking the
  supremum over all such $M$, we obtain $\sHL(\ell)/\ell \le 552$.
  From this we obtain the upper bound
  in~\eqref{equation:sHL-estimate}.

  By the definition of $\sHL(\ell)$, we have
  \[
  \frac{\csimp(M)}{\cHL(M)} \le \frac{\sHL(\cHL(M))}{\cHL(M)}
  \]
  for any~$M$.  By
  Theorem~\ref{theorem:lens-space-inequalities}, the
  limit supremum of the left hand side as $\cHL(M)\to\infty$ is
  bounded from below by $1/1089270$.  From this the lower bound in
  \eqref{equation:sHL-estimate} follows.

  The analogous statement for the function $\ssurg(k)$ is proved by
  the same argument.
\end{proof}

\bibliographystyle{amsalpha}
\renewcommand{\MR}[1]{}
\bibliography{research}

\end{document}